\newtheorem{lemma}{Lemma}[section]
\newtheorem{theorem}{Theorem}[section]
\newtheorem{remark}{Remark}[section]
\newtheorem{corollary}{Corollary}[section]
\numberwithin{equation}{section}
\newcommand{\dis}{\displaystyle}
\newcommand{\R}{\mathbb{R}}
\newcommand{\semiG}{\mathbb{A}}
\newcommand{\FM}{\mathbf{M}}
\newcommand{\FP}{\mathbf{P}}
\newcommand{\FL}{\mathbf{L}}
\newcommand{\FI}{\mathbf{I}}
\newcommand{\CD}{\mathcal{D}}
\newcommand{\CE}{\mathcal{E}}
\newcommand{\CF}{\mathcal{F}}
\newcommand{\CH}{\mathcal{H}}
\newcommand{\CN}{\mathcal{N}}
\newcommand{\CZ}{\mathcal{Z}}
\newcommand{\MRk}{\mathfrak{R}}
\newcommand{\na}{\nabla}
\newcommand{\al}{\alpha}
\newcommand{\be}{\beta}
\newcommand{\ga}{\gamma}
\newcommand{\om}{\omega}
\newcommand{\la}{\lambda}
\newcommand{\de}{\delta}
\newcommand{\si}{\sigma}
\newcommand{\pa}{\partial}
\newcommand{\eps}{\epsilon}
\newcommand{\De}{\Delta}
\newcommand{\Ga}{\Gamma}
\newcommand{\lag}{\langle}
\newcommand{\rag}{\rangle}
\begin{document}

\title[Dissipation of the Vlasov-Maxwell-Boltzmann System]{Dissipative property of the
Vlasov-Maxwell-Boltzmann System with a uniform ionic background}
\author{Renjun Duan}
\address{Department of Mathematics, The Chinese University of Hong Kong,
Shatin, Hong Kong} \email{rjduan@math.cuhk.edu.hk}

\date{}

\thanks{Keywords: Vlasov-Maxwell-Boltzmann system; energy method; dissipation rate; time-decay rate.}

\begin{abstract}
In this paper we discuss the dissipative property of near-equilibrium
classical solutions to the Cauchy problem of the
Vlasov-Maxwell-Boltzmann System in the whole space $\R^3$ when the
positive charged ion flow provides a spatially uniform background. The most key point of studying this coupled degenerately dissipative system here is to establish the dissipation of the electromagnetic field which turns out to be of the regularity-loss type. Precisely,
for the linearized non-homogeneous system, some $L^2$ energy
functionals and $L^2$ time-frequency functionals which are
equivalent with the naturally existing ones are designed to capture
the optimal dissipation rate of the system, which in turn yields
the optimal $L^p$-$L^q$ type time-decay estimates of the
corresponding linearized solution operator. These results show a
special feature of the one-species  Vlasov-Maxwell-Boltzmann
system different from the case of two-species, that is, the
dissipation of the magnetic field in one-species is strictly weaker
than the one in two-species. As a by-product, the
global existence of solutions to the nonlinear Cauchy problem is also proved by
constructing some similar energy functionals  {but the time-decay rates of the obtained solution still remain open}.
\end{abstract}

\maketitle

\thispagestyle{empty}





\tableofcontents


\section{Introduction}

\subsection{Main results}

The Vlasov-Maxwell-Boltzmann system is an important model for plasma physics to describe the time evolution of dilute charged particles (e.g., electrons and ions in the case of two-species) under the influence of the self-consistent internally generated Lorentz forces \cite{MRS}.  {In physical situations the ion mass is usually much larger than the electron mass so that the electrons move much faster than the ions. Thus, the ions are often
described by a fixed ion background $n_{\rm b}(x)$ and only the electrons move. For such simple case,} the Vlasov-Maxwell-Boltzmann system takes the form of
\begin{equation}\label{VMB.o}
    \left\{\begin{array}{l}
      \dis \pa_t f+\xi\cdot \na_x f+ (E+\xi \times B)\cdot \na_\xi f =Q
      (f,f),\\[3mm]
      \dis\pa_t E-\na_x\times B=-\int_{\R^3}\xi f\, d\xi,\\[3mm]
      \dis\pa_t B +\na_x \times E =0,\\[3mm]
      \dis\na\cdot E=\int_{\R^3}f \,d\xi-n_{\rm b},\ \ \na_x\cdot B=0.
    \end{array}\right.
\end{equation}
Here, the unknowns are $f=f(t,x,\xi): (0,\infty)\times \R^3\times \R^3\to [0,\infty)$, $E=E(t,x): (0,\infty)\times \R^3\to \R^3$ and $B=B(t,x): (0,\infty)\times \R^3\to \R^3$, with $f(t,x,\xi)$ standing for the number
distribution function of one-species of particles (e.g., electrons) which have position
$x=(x_1,x_2,x_3)$ and velocity
$\xi=(\xi_1,\xi_2,\xi_3)$ at time $t$, and $E(t,x)$ and
$B(t,x)$ denoting the electromagnetic field in terms of the time-space variable $(t,x)$. The initial
data of the system at $t=0$ is given by
\begin{equation}\label{VMB.o.ID}
 f(0,x,\xi)=f_{0}(x,\xi),\ \ E(0,x)=E_0(x),\ \ B(0,x)=B_0(x).
\end{equation}
$Q$ is the bilinear
Boltzmann collision operator \cite{CIP-Book} for the hard-sphere model defined by
\begin{eqnarray*}
&\dis Q(f,g)=\int_{\R^3\times S^{2}}(f'g_\ast'-fg_\ast)
  |(\xi-\xi_\ast)\cdot\om |d\om d\xi_\ast, \\
&\dis f=f(t,x,\xi),\ \ f'=f(t,x,\xi'), \ \ g_\ast=g(t,x,\xi_\ast), \
\ g_\ast'=g(t,x,\xi_\ast'),\\
&\dis \xi'=\xi-[(\xi-\xi_\ast)\cdot \om]\om,\ \
\xi_\ast'=\xi_\ast+[(\xi-\xi_\ast)\cdot \om]\om,\ \ \om\in S^{2}.
\end{eqnarray*}
 {Notice that system \eqref{VMB.o} in general  contains physical constants such as the charge and mass of electrons and the speed of light, cf.~\cite{GuoVMB}. Since our purpose in this paper is to investigate the dissipative property of solutions near global Maxwellians, those physical constants in   system \eqref{VMB.o} have been normalized to be one for notational simplicity. Through this paper, $n_{\rm b}(x)$ is assumed to be a positive constant denoting the spatially uniform density of the ionic background, and we also set $n_{\rm b}=1$ without loss of generality.}

We are interested in the solution to the Cauchy problem  for the
case when the number distribution function $f(t,x,\xi)$ is near an equilibrium
state $\FM$ and $E(t,x)$ and $B(t,x)$ have small amplitudes, where $\FM$ denotes the normalized Maxwellian
\begin{equation*}
 \FM=\FM(\xi)=(2\pi)^{ -3/2}e^{-|\xi|^2/2}.
\end{equation*}
For that, set the perturbation $u$ by
\begin{equation*}
f(t,x,\xi)=\FM + \FM^{1/2} u(t,x,\xi).
\end{equation*}
Then, the Cauchy problem \eqref{VMB.o}, \eqref{VMB.o.ID} can be
reformulated as
\begin{equation}\label{VMB.eq}
    \left\{\begin{array}{l}
      \dis \pa_t u+\xi\cdot \na_x u+ (E+\xi \times B)\cdot \na_\xi u-\xi\FM^{1/2}\cdot E\\
       \dis \hspace{6cm}=\FL u +\Ga(u,u)+\frac{1}{2}\xi\cdot E u,\\[3mm]
      \dis\pa_t E-\na_x\times B=-\int_{\R^3}\xi \FM^{1/2} u\, d\xi,\\[3mm]
      \dis\pa_t B +\na_x \times E =0,\\[3mm]
      \dis\na\cdot E=\int_{\R^3}\FM^{1/2} u\, d\xi,\ \ \na_x\cdot B=0,
    \end{array}\right.
\end{equation}
with initial data
\begin{eqnarray}\label{VMB.ID}
&\dis u(0,x,\xi)=u_{0}(x,\xi),\ \ E(0,x)=E_0(x),\ \ B(0,x)=B_0(x).
\end{eqnarray}
Here, the linear term $\FL u$ and the nonlinear term $\Ga(u,u)$ are defined in \eqref{def.L}
and \eqref{def.Ga} later on. The problems to be considered are {\it (i) whether or
not any small amplitude solution
\begin{equation*}
    [u(t),E(t),B(t)]: \R^+\to X=H^N(\R^3_x\times \R^3_\xi)\times
    H^N(\R^3_x)\times H^N(\R^3_x)
\end{equation*}
with a properly large $N$ for the above reformulated Cauchy problem
uniquely exists for all $t>0$ if initial data $[u_0,E_0,B_0]\in X$
is sufficiently small; (ii) if so, does the solution decay in time
with some explicit rate?} We shall give in this paper a satisfactory
answer to the first question and a partial answer to the second one
only in the linearized level. Since these two issues have been extensively studied in different contents such as the Boltzmann equation \cite{Duan,D-Hypo}, the Vlasov-Poisson-Boltzmann system \cite{DY-09VPB,DS-VPB} and the Vlasov-Maxwell-Boltzmann system for two-species \cite{DS-VMB}, our emphasize here will be put on the study of the weakly dissipative property of the electromagnetic field and its resulting slow time-decay rate of solutions, which are  even different from the case of two-species. All details for their discussions are left to the next subsection.

Let us begin with the Cauchy problem on the linearized
non-homogeneous Vlasov-Maxwell-Boltzmann system, in the form of
\begin{equation}\label{VMB.li.eq}
    \left\{\begin{array}{l}
      \dis \pa_t u+\xi\cdot \na_x u-\xi\FM^{1/2}\cdot E
      =\FL u +h,\\[3mm]
      \dis\pa_t E-\na_x\times B=-\int_{\R^3}\xi \FM^{1/2} u\, d\xi,\\[3mm]
      \dis\pa_t B +\na_x \times E =0,\\[3mm]
      \dis\na\cdot E=\int_{\R^3}\FM^{1/2} u\, d\xi,\ \ \na_x\cdot B=0,
    \end{array}\right.
\end{equation}
with
\begin{eqnarray}\label{VMB.li.ID}
&\dis u(0,x,\xi)=u_{0}(x,\xi),\ \ E(0,x)=E_0(x),\ \ B(0,x)=B_0(x),
\end{eqnarray}
where $h=h(t,x,\xi)$ denotes a given non-homogeneous source term.  For simplicity, we write
\begin{equation*}
    U=[u,E,B],\ \ U_0=[u_0,E_0,B_0].
\end{equation*}
Moreover, $U_0=[u_0,E_0,B_0]$ is always supposed to satisfy the last
equation of \eqref{VMB.li.eq} for $t=0$. The first result, concerning
the naturally existing energy functional and its optimal dissipation
rate, is stated as follows. Here and hereafter, $\nu=\nu(\xi)$ and
$\FP$  are defined in \eqref{def.nu} and \eqref{def.P},
respectively; see Subsection \ref{sec.notation} for more notations used in this
paper.

\begin{theorem}\label{thm.efli}
Let $N\geq 3$. Assume $\nu^{-1/2}h\in L^2_\xi(H^N_x)$ with $\FP h (t,x,\xi)=0$. Define the temporal $L^2$-energy functional $\CE_N^{\rm lin}(U(t))$
and its dissipation rate $\CD_N^{\rm lin}(U(t))$ by
\begin{eqnarray}
    \CE_N^{\rm lin}(U(t))&\sim& \|u(t)\|_{L^2_\xi
    (H^N_x)}^2+\|[E(t),B(t)]\|_{ {H^N_x}}^2,\label{thm.efli.00}\\
    \CD_N^{\rm lin}(U(t)) &= &
    \|\nu^{1/2}\{\FI-\FP\}u(t)\|_{L^2_\xi(H^{N}_x)}^2 +\|\na_x \FP u(t)\|_{L^2_\xi(H^{N-1}_x)}^2\label{thm.efli.01}\\
    &&+\|\na_x E(t)\|_{ {H^{N-2}_x}}^2+\|\na_x^2
    B(t)\|_{ {H^{N-3}_x}}^2.\nonumber
\end{eqnarray}
Then, for any smooth solution  $U=[u,E,B]$ of the Cauchy problem \eqref{VMB.li.eq}-\eqref{VMB.li.ID} belonging to $L^2_\xi(H^N_x)\times  {H^N_x}\times  {H^N_x}$,  there  {exists a continuous functional} $\CE_N^{\rm lin}(U(t))$ given in
\eqref{thm.efli.p7} such
that
\begin{equation}\label{thm.efli.1}
    \frac{d}{dt}\CE_N^{\rm lin}(U(t))+\la\CD_N^{\rm lin}(U(t))\leq C
    \|\nu^{-1/2}h(t)\|_{L^2_\xi(H^N_x)}^2
\end{equation}
for any $t\geq 0$.
\end{theorem}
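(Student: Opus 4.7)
The plan is to build a Lyapunov functional $\CE_N^{\rm lin}$ as a hierarchical combination of four blocks, each designed to produce one term in $\CD_N^{\rm lin}$, with graded small coefficients so that equivalence to the natural norm is preserved. The starting point is the basic $L^2$ identity: for every multi-index $\al$ with $|\al|\leq N$, test $\pa^\al_x$ of the kinetic equation against $\pa^\al_x u$ in $L^2_{x,\xi}$ and simultaneously test $\pa^\al_x$ of the Ampere and Faraday equations against $\pa^\al_x E$ and $\pa^\al_x B$. The $\na_x\times$ pair cancels by integration by parts, and the cross term $-\lag\pa^\al_x(\xi\FM^{1/2}\cdot E),\pa^\al_x u\rag$ exactly balances the current contribution coming from Ampere because $\int\xi\FM^{1/2}u\,d\xi=b$. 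Combined with the coercivity $-\lag\FL u,u\rag\geq\la\|\nu^{1/2}\{\FI-\FP\}u\|^2$, this yields the top-order $L^2$ bound with microscopic dissipation only.

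The macroscopic dissipation is extracted in the Guo-type framework of \cite{Duan,DY-09VPB,DS-VPB,DS-VMB}. Decomposing $\FP u=\{a+b\cdot\xi+c(|\xi|^2-3)\}\FM^{1/2}$ up to normalization and taking moments against $\FM^{1/2}$, $\xi_i\FM^{1/2}$, $(|\xi|^2-3)\FM^{1/2}$ together with suitable second-order moments produces a fluid-type system whose momentum equation reads schematically $\pa_t b+\na_x(a+2c)+\na_x\cdot(\text{stress from }\{\FI-\FP\}u)-E=\text{source}$. A bilinear interaction functional $\CI_{\rm mac}$ in these coefficients gives $\|\na_x\FP u\|^2_{L^2_\xi(H^{N-1}_x)}$ modulo terms absorbable by the microscopic dissipation. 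The same momentum equation then drives the electric dissipation: define
\begin{equation*}
\CI_E=-\sum_{1\leq|\al|\leq N-1}\lag\pa^\al_x b,\pa^\al_x E\rag,
\end{equation*}
differentiate in time, substitute the $b$-equation on the first factor and Ampere on the second, and use Poisson $\na_x\cdot E=a$ to convert $\lag\pa^\al_x\na_x a,\pa^\al_x E\rag$ into $\|\pa^\al_x a\|^2$. The restriction $|\al|\geq 1$ is essential: it guarantees that the spurious $\|\pa^\al_x b\|^2$ appearing on the right-hand side is already controlled by $\|\na_x\FP u\|^2_{L^2_\xi(H^{N-1}_x)}$ from the previous step. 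The output is precisely $\|\na_x E\|^2_{H^{N-2}_x}$ with a good sign.

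The magnetic dissipation is accessible only through the hyperbolic Maxwell pair: taking $\na_x\times$ of Ampere and substituting Faraday gives $\pa_t^2 B-\De_x B=\na_x\times b$, so dissipation of $B$ can only be obtained at the cost of one spatial derivative. Set
\begin{equation*}
\CI_B=-\sum_{1\leq|\al|\leq N-2}\lag\pa^\al_x E,\pa^\al_x(\na_x\times B)\rag,
\end{equation*}
and a direct computation using both Maxwell equations yields, at each level $|\al|$, $\tfrac{d}{dt}\CI_B^\al+\|\pa^\al_x\na_x\times B\|^2=\|\pa^\al_x\na_x\times E\|^2+\lag\pa^\al_x b,\pa^\al_x\na_x\times B\rag$. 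Because $|\al|\geq 1$, the spurious $\|\pa^\al_x\na_x\times E\|^2$ is of strictly positive differential order and is absorbed by the $E$-dissipation of the preceding step; the $b$-cross term is handled by Young's inequality together with the macroscopic dissipation; combined with $\na_x\cdot B=0$ this yields exactly $\|\na_x^2 B\|^2_{H^{N-3}_x}$.

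Finally, set
\begin{equation*}
\CE_N^{\rm lin}(U)=\sum_{|\al|\leq N}\bigl(\|\pa^\al_x u\|^2+\|\pa^\al_x E\|^2+\|\pa^\al_x B\|^2\bigr)+\ka_1\CI_{\rm mac}+\ka_2\CI_E+\ka_3\CI_B,
\end{equation*}
with $1\gg\ka_1\gg\ka_2\gg\ka_3>0$, which ensures both the equivalence \eqref{thm.efli.00} and the cascade of absorptions leading to \eqref{thm.efli.1}. The main obstacle is the magnetic step: the derivative range in $\CI_B$ must be chosen precisely so that no zeroth-order $\|\na_x\times E\|^2$ is generated, otherwise the cascade breaks down. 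This is exactly the mechanism of the \emph{regularity-loss} dissipation announced in the abstract and the reason why the dissipation of $B$ for one species is strictly weaker than in the two-species case, where additional cancellations between the two distribution functions close the estimate without any derivative loss.
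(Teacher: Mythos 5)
Your proposal is essentially the paper's own proof: the same top-order $L^2$ estimate, the same Guo-type macroscopic interaction functionals built from the moment system \eqref{ME.lieq.macro}, the same two electromagnetic interaction functionals $-\sum_{1\le|\al|\le N-1}\lag\pa^\al E,\pa^\al b\rag$ and $-\sum_{1\le|\al|\le N-2}\lag\pa^\al \na_x\times B,\pa^\al E\rag$ with exactly the same derivative ranges, the same recovery of $\|\na_x^2B\|_{H^{N-3}}$ from $\na_x\times B$ via $\na_x\cdot B=0$ (cf.\ \eqref{thm.efli.p7.1}), and the same graded-constant cascade \eqref{thm.efli.p7}. Two points are glossed over and should be stated correctly. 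First, your claim that the macroscopic block closes ``modulo terms absorbable by the microscopic dissipation'' is not accurate: since $E$ sits in the $b$-equation \eqref{ME.lieq.macro}$_2$, the momentum interaction functional unavoidably produces an $\eps\,\|\na_x E\|^2_{H^{N-2}}$ term on the right (this is precisely the one-species difficulty the paper emphasizes, see \eqref{thm.efli.p2}); it can only be absorbed later by the $E$-dissipation of your $\CI_E$ block, which forces the Cauchy--Schwarz parameter there to be chosen small relative to $\ka_2/\ka_1$ --- the analogue of the paper's ordering of $\kappa_3$, $\eps_2$, $\kappa_4$. Second, in the $\CI_E$ step the substitution of Amp\`ere's law also generates the cross term $\lag\pa^\al b,\pa^\al\na_x\times B\rag$, which you do not mention; at the top order $|\al|=N-1$ it carries $N$ derivatives of $B$, beyond the available $B$-dissipation, and one must integrate by parts to shift a derivative onto $b$ (harmless, since $\na_x b$ is dissipated up to order $N$), exactly as done after \eqref{thm.efli.p8}. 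Your use of $\na_x\cdot E=a$ to produce a good-signed $-\|\pa^\al a\|^2$ in the $\CI_E$ step is a legitimate minor variant; the paper instead obtains the $a$-dissipation from the $\lag\na_x\pa^\al a,\pa^\al b\rag$ functional and bounds $\lag\pa^\al E,\pa^\al\na_x(a+2c)\rag$ by Cauchy--Schwarz against the macroscopic dissipation.
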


\begin{remark}\label{rem.phy.diss}
The above theorem shows the precise dissipative
property of the naturally existing $L^2$-energy functional
$$
\|u(t)\|_{L^2_\xi
(H^N_x)}^2+\|[E(t),B(t)]\|_{ {H^N_x}}^2
$$
for a properly large $N$. The construction of the equivalent energy
functional  $\CE_N^{\rm lin}(U(t))$ is used to capture the optimal
dissipation rate $\CD_N^{\rm lin}(U(t))$, which will also be
revisited in Corollary \ref{cor2.tfli}.   This is different from the
case of the two-species Vlasov-Maxwell-Boltzmann system as in
\cite{DS-VMB}, where the dissipation rate is a little stronger due
to a cancelation phenomenon between two species which was firstly
observed in \cite{S.VMB}. Specifically, if it is here supposed that
$h=0$ and $\CE_N^{\rm lin}(U_0)$ is finite, then not only all
macroscopic quantities $\FP u$, $E$, $B$ and the highest-order
derivative $\na_x^N[E,B]$ of the electromagnetic field lose their
time-space integrability, but also the same thing happens to the
second-order derivative $\na_x^2 B$ of the magnetic field. On the
other hand, for the energy space with $m$-order spatial regularity
for any integer $m\geq 0$, its optimal dissipation rate can be
described by $\CD_m^{\rm lin}(U(t))$ once again from Corollary
\ref{cor2.tfli}.  Finally, as seen from Theorem \ref{thm.ls} later
on, this kind of weaker dissipation property leads to some slower
time-decay rates of solutions.
\end{remark}

The second result about some time-frequency functional and its optimal dissipation rate is stated as follows.

\begin{theorem}\label{thm.tfli}
Assume $\nu^{-1/2}\hat{h}\in L^2_\xi$ for $t\geq 0$, $k\in \R^3$,
and $\FP h=0$. Define the $L^2$ time-frequency functional
$\CE(\hat{U}(t,k))$ and its dissipation rate  $\CD(\hat{U}(t,k))$ by
\begin{eqnarray}
   \CE(\hat{U}(t,k))&\sim& \|\hat{u}\|_{L^2_\xi}^2+|[\hat{E},\hat{B}]|^2,\label{thm.tfli.1}\\
   \CD(\hat{U}(t,k))&=&\|\nu^{1/2}\{\FI-\FP\}\hat{u}\|_{L^2_\xi}+|k\cdot\hat{E}|^2
   +\frac{|k|^2}{1+|k|^2}|[\hat{a},\hat{b},\hat{c}]|^2\label{thm.tfli.2}\\
   &&+\frac{|k|^2}{(1+|k|^2)^2}|\hat{E}|^2+\frac{|k|^4}{(1+|k|^2)^3}|\hat{B}|^2.\nonumber
\end{eqnarray}
Then, for any solution  $U=[u,E,B]$ of the Cauchy problem \eqref{VMB.li.eq} and \eqref{VMB.li.ID} satisfying that $\|\hat{u}\|_{L^2_\xi}^2+|[\hat{E},\hat{B}]|^2$ is finite for $t\geq 0$ and $k\in \R^3$, there indeed exists a time-frequency functional $\CE(\hat{U}(t,k))$ given in \eqref{def.lin.tk} such that
\begin{equation}\label{thm.tfli.3}
    \pa_t\CE(\hat{U}(t,k))+\la\CD(\hat{U}(t,k))\leq C\|\nu^{-1/2}\hat{h}\|_{L^2_\xi}^2
\end{equation}
for any $t\geq 0$ and $k\in \R^3$.
\end{theorem}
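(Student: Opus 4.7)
The plan is to carry out a refined macroscopic--microscopic energy analysis directly on the Fourier side, echoing the strategy used for Theorem \ref{thm.efli} but now tracking the $k$-weights carefully so as to accommodate the regularity-loss dissipation of the electromagnetic field highlighted in Remark \ref{rem.phy.diss}. First I would Fourier-transform \eqref{VMB.li.eq} in $x$ and pair the first equation with $\hat u$ in $L^2_\xi$, the second with $\hat E$, and the third with $\hat B$. The electric source $\xi\FM^{1/2}\cdot\hat E$ in the streaming term cancels against the current $\widehat{\FM^{1/2}\xi u}$ in Amp\`ere's law, producing the basic identity
\begin{equation*}
\tfrac12\pa_t\bigl(\|\hat u\|_{L^2_\xi}^2+|\hat E|^2+|\hat B|^2\bigr)+\la\|\nu^{1/2}\{\FI-\FP\}\hat u\|_{L^2_\xi}^2\le C\|\nu^{-1/2}\hat h\|_{L^2_\xi}^2.
\end{equation*}
This accounts only for the microscopic dissipation; every other term in $\CD(\hat U)$ must be extracted by an auxiliary interaction functional whose $k$-weight is tuned to keep the total equivalent to the base energy uniformly in $k$.

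Next I would project the first equation of \eqref{VMB.li.eq} onto the hydrodynamic subspace $\FP$ to obtain a fluid-type system in Fourier variables for $[\hat a,\hat b,\hat c]$, driven by $\rmi k$, by moments of $\{\FI-\FP\}\hat u$, and, in the momentum equation, by $\hat E$. Standard cross-type functionals of the schematic form $\rmre(\rmi k\,\widehat{M_1}\cdot\overline{\widehat{M_2}})$, with $M_i$ running over the hydrodynamic quantities and selected moments of $\{\FI-\FP\}\hat u$, yield upon time differentiation and after absorbing the microscopic remainders into $\|\nu^{1/2}\{\FI-\FP\}\hat u\|_{L^2_\xi}^2$ a dissipation of order $|k|^2|[\hat a,\hat b,\hat c]|^2$. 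Multiplying each such functional by $(1+|k|^2)^{-1}$ keeps it bounded by the base energy uniformly in $k$ and delivers the factor $\frac{|k|^2}{1+|k|^2}|[\hat a,\hat b,\hat c]|^2$; the term $|k\cdot\hat E|^2$ comes for free from the Poisson constraint $\rmi k\cdot\hat E=\hat a$. To recover the solenoidal part of $\hat E$ I would then use
\begin{equation*}
G_E=-\,\rmre\bigl(\widehat{\FM^{1/2}\xi u}\cdot\overline{\hat E}\bigr),
\end{equation*}
whose time derivative reproduces $|\hat E|^2$ up to pollution in $\hat b$, $\{\FI-\FP\}\hat u$, $\hat h$ and $\rmi k\times\hat B$; weighting $G_E$ by $(1+|k|^2)^{-2}$ allows the $|k|^2|\hat B|^2$ pollution to be dominated by the base energy and yields the stated $\frac{|k|^2}{(1+|k|^2)^2}|\hat E|^2$ piece.

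The main obstacle is recovering dissipation of $\hat B$, which appears only in the conservative Faraday--Amp\`ere pair and has no intrinsic damping. For this I would use a functional involving
\begin{equation*}
G_B=-\,\rmre\bigl(\rmi k\times\hat E\cdot\overline{\hat B}\bigr),
\end{equation*}
whose derivative, via Amp\`ere's law combined with the constraint $\rmi k\cdot\hat B=0$, reproduces $|k|^2|\hat B|^2$ modulo pollution in $\hat E$, $\hat u$ and $\hat h$. Because the only dissipation of $\hat E$ available at this stage carries the factor $\frac{|k|^2}{(1+|k|^2)^2}$, controlling the pollution against it forces a further discount of $\frac{|k|^2}{1+|k|^2}$, so that $G_B$ must be weighted by $(1+|k|^2)^{-3}$, producing exactly the claimed $\frac{|k|^4}{(1+|k|^2)^3}|\hat B|^2$. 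This cascading loss is the Fourier-side manifestation of the weakened magnetic dissipation peculiar to the one-species model and is the technical heart of the argument. Finally I would set
\begin{equation*}
\CE(\hat U)=\|\hat u\|_{L^2_\xi}^2+|\hat E|^2+|\hat B|^2+\sum_{i}\frac{\ka_i}{(1+|k|^2)^{n_i}}\,G_i(\hat U),\qquad n_i\in\{1,2,3\},
\end{equation*}
and choose the small constants $\ka_i$ ordered so that each newly added interaction functional only perturbs the previously established positivity and dissipation; the equivalence \eqref{thm.tfli.1} then holds uniformly in $k$ since every $G_i$ is bounded by the base energy, and summing the individual differential inequalities with Cauchy--Schwarz on the $\hat h$-contributions yields \eqref{thm.tfli.3}.
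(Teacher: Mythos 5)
Your overall architecture is the same as the paper's: the flat Fourier energy identity, weighted hydrodynamic cross functionals giving $\frac{|k|^2}{1+|k|^2}|[\hat a,\hat b,\hat c]|^2$ and $|\hat a|^2=|k\cdot\hat E|^2$, then an $E$--$b$ interaction for the electric field and an $E$--$B$ interaction for the magnetic field, assembled through a hierarchy of small constants. However, the $k$-weights you assign to $G_E$ and $G_B$ are internally inconsistent, and with them the low-frequency absorption fails. Since $\int\xi\FM^{1/2}\hat u\,d\xi=\hat b$, your $G_E=-\rmre\bigl(\hat b\cdot\overline{\hat E}\bigr)$ weighted by $(1+|k|^2)^{-2}$ produces the dissipation $\frac{|\hat E|^2}{(1+|k|^2)^2}$, not the claimed $\frac{|k|^2}{(1+|k|^2)^2}|\hat E|^2$, together with pollutions $\frac{|\hat b|^2}{(1+|k|^2)^2}$ and $\frac{|k|\,|\hat b|\,|\hat B|}{(1+|k|^2)^2}$; the $\hat b$-part cannot be absorbed by the only available control on $\hat b$, namely $\frac{|k|^2}{1+|k|^2}|\hat b|^2$, because the ratio blows up as $|k|\to 0$. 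Indeed no bound with dissipation $\frac{|\hat E|^2}{(1+|k|^2)^2}$ can hold: at $k=0$ the moment and Maxwell equations reduce to the undamped oscillator $\pa_t\hat b=\hat E$, $\pa_t\hat E=-\hat b$, so the transverse field has no decay at zero frequency -- the $|k|^2$ numerator in \eqref{thm.tfli.2} is not cosmetic. The same factor is missing for $G_B$: the weight $(1+|k|^2)^{-3}$ yields $\frac{|k|^2}{(1+|k|^2)^3}|\hat B|^2$ (not $\frac{|k|^4}{(1+|k|^2)^3}|\hat B|^2$), and the pollution $\frac{|k|\,|\hat b|\,|\hat B|}{(1+|k|^2)^3}$ leaves, after Cauchy--Schwarz, a term $\frac{|\hat b|^2}{(1+|k|^2)^3}$ that again escapes the hydrodynamic dissipation near $k=0$. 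Also, pollution terms must be absorbed by dissipation terms (possibly ones produced by functionals added later in the hierarchy), not ``dominated by the base energy'': a right-hand side comparable to $\CE$ would not give \eqref{thm.tfli.3}.

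The fix is precisely an extra factor $|k|^2$ in your weights, i.e.\ use $\frac{|k|^2}{(1+|k|^2)^2}G_E$ and $\frac{|k|^2}{(1+|k|^2)^3}G_B$; or, as the paper does, build the factor in by pairing transverse components, taking $-(1+|k|^2)^{-2}\MRk(k\times\hat E\mid k\times\hat b)$ and $-\kappa_2\,|k|^2(1+|k|^2)^{-3}\MRk(ik\times\hat B\mid\hat E)$, obtained by substituting the momentum equation for $\hat E$ and Amp\`ere's law for $ik\times\hat B$. With these weights every pollution ($\frac{|k|^2|\hat b|^2}{1+|k|^2}$, microscopic moments, $\eps$-fractions of the $\hat E$- and $\hat B$-dissipations) closes, and the identity $|k|^2|\hat E|^2=|\hat a|^2+|k\times\hat E|^2$ reassembles \eqref{thm.tfli.2}. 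Note finally that, since you do not project out the longitudinal part, your $G_E$ also generates the term $\rmre\bigl(ik(\hat a+2\hat c)\cdot\overline{\hat E}\bigr)$, harmless with the corrected weight but missing from your list of pollutions.
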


\begin{corollary}\label{cor1.tfli}
Under Theorem \ref{thm.tfli},
$\CE(\hat{U}(t,k)) $ further satisfies
\begin{equation}\label{cor1.tfli.1}
    \pa_t \CE(\hat{U}(t,k))+\frac{\la |k|^4}{(1+|k|^2)^3} \CE(\hat{U}(t,k))\leq C\|\nu^{-1/2}\hat{h}\|_{L^2_\xi}^2
\end{equation}
for any $t\geq 0$ and $k\in \R^3$.
\end{corollary}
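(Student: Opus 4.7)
The plan is to deduce \eqref{cor1.tfli.1} from Theorem \ref{thm.tfli} by establishing a pointwise frequency-wise comparison
$$
\CD(\hat{U}(t,k))\geq c\,\frac{|k|^4}{(1+|k|^2)^3}\,\CE(\hat{U}(t,k))
$$
for every $t\geq 0$ and $k\in\R^3$, with some absolute constant $c>0$. Once this lower bound is in hand, inserting it into \eqref{thm.tfli.3} immediately yields \eqref{cor1.tfli.1}.

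To prove the pointwise bound, I would start from the equivalence \eqref{thm.tfli.1} together with the macro-micro decomposition $\hat{u}=\FP\hat{u}+\{\FI-\FP\}\hat{u}$, where $\FP\hat{u}$ is parametrized by the hydrodynamic variables $[\hat{a},\hat{b},\hat{c}]$, so that
$$
\CE(\hat{U}(t,k))\lesssim\|\{\FI-\FP\}\hat{u}\|_{L^2_\xi}^2+|[\hat{a},\hat{b},\hat{c}]|^2+|\hat{E}|^2+|\hat{B}|^2.
$$
It therefore suffices to bound each of these four quantities, multiplied by $\frac{|k|^4}{(1+|k|^2)^3}$, by a corresponding term appearing in $\CD(\hat{U}(t,k))$.

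The key observation is that the weight $\frac{|k|^4}{(1+|k|^2)^3}$ is bounded above by $1$ and is dominated, up to constants, by each of the weights $\frac{|k|^2}{1+|k|^2}$ and $\frac{|k|^2}{(1+|k|^2)^2}$ that appear elsewhere in $\CD$. Consequently, using $\nu(\xi)\geq\nu_0>0$, the microscopic part $\|\{\FI-\FP\}\hat{u}\|_{L^2_\xi}^2$ is absorbed into $\|\nu^{1/2}\{\FI-\FP\}\hat{u}\|_{L^2_\xi}^2$; the hydrodynamic part $|[\hat{a},\hat{b},\hat{c}]|^2$ is absorbed into $\frac{|k|^2}{1+|k|^2}|[\hat{a},\hat{b},\hat{c}]|^2$; the electric contribution $|\hat{E}|^2$ is absorbed into $\frac{|k|^2}{(1+|k|^2)^2}|\hat{E}|^2$; and the magnetic contribution $\frac{|k|^4}{(1+|k|^2)^3}|\hat{B}|^2$ already appears verbatim. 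Summing these elementary comparisons yields the desired pointwise bound. There is no real obstacle here; the content of the corollary is simply to record that among the frequency-dependent weights in $\CD$, the slowest one, dictated by $\hat{B}$, controls the overall rate, thereby making precise the regularity-loss phenomenon of the magnetic field emphasized in Remark \ref{rem.phy.diss}.
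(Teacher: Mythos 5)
Your proposal is correct and follows essentially the same route as the paper: the paper also deduces \eqref{cor1.tfli.1} directly from \eqref{thm.tfli.3} by observing the pointwise bound $\CD(\hat{U}(t,k))\geq \la \frac{|k|^4}{(1+|k|^2)^3}\CE(\hat{U}(t,k))$, which holds by comparing the frequency weights exactly as you do (the paper merely states this as immediate from the definitions \eqref{thm.tfli.1} and \eqref{thm.tfli.2}, while you spell out the weight comparisons and the use of $\nu\geq\nu_0$).
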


\begin{remark}
Theorem  \ref{thm.tfli} and Corollary \ref{cor1.tfli} show that the
magnetic field $B$ bears the weakest dissipation property among all
quantities $\{\FI-\FP\}u$, $a$, $b$, $c$, $E$ and $B$, and even the
dissipation of $B$ here is much weaker than that in the case of
two-species Vlasov-Maxwell-Boltzmann system as in \cite{DS-VMB}.
This is also consistent with what has been mentioned in Remark
\ref{rem.phy.diss}.

\end{remark}

\begin{corollary}\label{cor2.tfli}
Let $m\geq 0$ be an integer.  Assume $\nu^{-1/2}\na_x^mh\in L^2_{x,\xi}$ with $\FP h (t,x,\xi)=0$. Define the $L^2$ energy functional $\CE_m^{\rm lin}(U(t))$
and its dissipation rate $\CD_m^{\rm lin}(U(t))$ by
\begin{eqnarray}
    \CE_m^{\rm lin}(U(t))&\sim& \|\na_x^mu(t)\|^2+\|\na_x^m[E(t),B(t)]\|^2,\label{cor2.tfli.1}\\
    \CD_m^{\rm lin}(U(t)) &= &
    \|\nu^{1/2}\na_x^m\{\FI-\FP\}u(t)\|^2 \label{cor2.tfli.2}\\
    &&+\|\na_x^ma\|^2+\|\na_x^{1+m} \lag \na_x \rag^{-1}[a,b,c]\|^2\nonumber\\
    &&+\|\na_x^{1+m} \lag \na_x \rag^{-2}E\|^2
    +\|\na_x^{2+m} \lag \na_x \rag^{-3}B\|^2.\nonumber
\end{eqnarray}
Then, for any smooth solution  $U=[u,E,B]$ of the Cauchy problem \eqref{VMB.li.eq} and \eqref{VMB.li.ID} whose $m$-order spatial derivative belongs to $L^2_{x,\xi}\times L^2\times L^2$,  there indeed exists  {a continuous functional} $\CE_m^{\rm lin}(U(t))$ given in \eqref{def.lin.morder} such
that
\begin{equation}\label{cor2.tfli.3}
    \frac{d}{dt}\CE_m^{\rm lin}(U(t))+\la\CD_m^{\rm lin}(U(t))\leq C
    \|\nu^{-1/2}\na_x^mh(t)\|^2
\end{equation}
for any $t\geq 0$.
\end{corollary}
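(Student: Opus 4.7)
The plan is to derive Corollary \ref{cor2.tfli} directly from Theorem \ref{thm.tfli} by integrating the pointwise-in-frequency differential inequality \eqref{thm.tfli.3} against the weight $|k|^{2m}$. This lifts a frequency-by-frequency dissipation estimate to an $L^2_x$-level estimate on the $m$-th order spatial derivatives of $U$, and since the main analytic effort has already been expended in constructing $\CE(\hat{U}(t,k))$ in Theorem \ref{thm.tfli}, the remaining task at the level of the corollary is essentially a bookkeeping exercise based on Plancherel's identity.

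Concretely, I would define the candidate energy functional by
\[
\CE_m^{\rm lin}(U(t)) \eqdef \int_{\R^3}|k|^{2m}\,\CE(\hat{U}(t,k))\, dk,
\]
with $\CE(\hat{U}(t,k))$ supplied by Theorem \ref{thm.tfli}. The equivalence \eqref{cor2.tfli.1} then follows immediately from \eqref{thm.tfli.1} and Plancherel: since $\CE(\hat{U}(t,k)) \sim \|\hat{u}(t,k)\|_{L^2_\xi}^2 + |[\hat{E},\hat{B}](t,k)|^2$ uniformly in $k$, multiplying by $|k|^{2m}$ and integrating gives $\CE_m^{\rm lin}(U(t))\sim\|\na_x^m u(t)\|^2+\|\na_x^m[E(t),B(t)]\|^2$. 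Continuity of $\CE_m^{\rm lin}(U(t))$ in $t$ is inherited from the assumed $L^2$ regularity of $\na_x^m U$ together with the continuous $t$-dependence of $\CE(\hat{U}(t,k))$ at each fixed $k$ coming from its explicit construction inside Theorem \ref{thm.tfli}.

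Next I would multiply \eqref{thm.tfli.3} by $|k|^{2m}$, integrate in $k\in\R^3$, and identify each resulting term via Plancherel. The left-hand side produces $\frac{d}{dt}\CE_m^{\rm lin}(U(t)) + \la\int_{\R^3}|k|^{2m}\CD(\hat{U}(t,k))\,dk$, while the source contribution $C\int_{\R^3}|k|^{2m}\|\nu^{-1/2}\hat{h}(t,k)\|_{L^2_\xi}^2\,dk$ is exactly $C\|\nu^{-1/2}\na_x^m h(t)\|^2$. The dissipation pieces match term by term: $|k|^{2m}\|\nu^{1/2}\{\FI-\FP\}\hat{u}\|_{L^2_\xi}^2$ integrates to $\|\nu^{1/2}\na_x^m\{\FI-\FP\}u\|^2$; using the Poisson-type constraint $\na_x\cdot E = \int_{\R^3}\FM^{1/2}u\,d\xi = a$ propagated by \eqref{VMB.li.eq}, one has $|k\cdot\hat{E}|^2=|\hat{a}|^2$, so $|k|^{2m}|k\cdot\hat{E}|^2$ integrates to $\|\na_x^m a\|^2$; and the Bessel-weighted symbols $\frac{|k|^{2m+2}}{1+|k|^2}$, $\frac{|k|^{2m+2}}{(1+|k|^2)^2}$, $\frac{|k|^{2m+4}}{(1+|k|^2)^3}$ are by definition of the multiplier $\lag\na_x\rag^{-s}$ exactly the Fourier symbols of $|\na_x^{1+m}\lag\na_x\rag^{-1}(\cdot)|^2$, $|\na_x^{1+m}\lag\na_x\rag^{-2}(\cdot)|^2$, $|\na_x^{2+m}\lag\na_x\rag^{-3}(\cdot)|^2$, so they integrate to the three remaining summands in $\CD_m^{\rm lin}$ of \eqref{cor2.tfli.2}. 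Collecting everything yields \eqref{cor2.tfli.3}. The one delicate point, which is really the only non-automatic step, is the rewriting of $|k\cdot\hat{E}|^2$ as $|\hat{a}|^2$; this uses propagation of the divergence constraint $\na_x\cdot E = a$ from the assumed compatibility at $t=0$, obtained by taking $\na_x\cdot$ of the Amp\`ere equation in \eqref{VMB.li.eq} and combining it with the macroscopic continuity equation $\pa_t a + \na_x\cdot b = 0$ arising from testing the kinetic equation against $\FM^{1/2}$. Beyond this, no new estimate past Plancherel is required, and the corollary is proved.
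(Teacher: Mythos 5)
Your proposal is correct and follows essentially the same route as the paper: the paper defines $\CE_m^{\rm lin}$ by applying the time--frequency functional of Theorem \ref{thm.tfli} to $(ik)^\al\hat U$ summed over $|\al|=m$ (equivalently, weighting by $|k|^{2m}$, as you do, since $\CE(\hat U)$ is quadratic and $\sum_{|\al|=m}|k^\al|^2\sim|k|^{2m}$), and then reads off \eqref{cor2.tfli.1}--\eqref{cor2.tfli.3} via Plancherel exactly as in your term-by-term identification, including the use of $ik\cdot\hat E=\hat a$ from \eqref{li.Maxwell.F}. No gap; your write-up is just a more explicit version of the paper's one-line argument.
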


\begin{remark}
It is straightforward to observe that Corollary  \ref{cor2.tfli}
implies Theorem \ref{thm.efli} by defining
\begin{equation*}
 \CE_N^{\rm lin}(U(t))=\sum_{m=0}^N\CE_m^{\rm lin}(U(t)),\ \ \
 \CD_N^{\rm lin}(U(t))=\sum_{m=0}^N\CD_m^{\rm lin}(U(t)),
\end{equation*}
and  {using}
\begin{equation*}
    \sum_{m=0}^N|\na_x|^{1+m}\lag \na_x\rag^{-1}= |\na_x|\lag
    \na_x\rag^{-1}\sum_{m=0}^N|\na_x|^{m}\sim |\na_x|\lag
    \na_x\rag^{N-1},
\end{equation*}
and likewise
\begin{equation*}
 \sum_{m=0}^N|\na_x|^{1+m}\lag \na_x\rag^{-2}\sim |\na_x|\lag
    \na_x\rag^{N-2},\ \ \ \sum_{m=0}^N|\na_x|^{2+m}\lag \na_x\rag^{-3}\sim |\na_x|^2\lag
    \na_x\rag^{N-3}.
\end{equation*}
 {Notice that the above identities and equivalent relations can be verified with respect to the frequency variable under the Fourier transform.} On the other hand, it is also interesting to see that even when $0\leq m<N$, $ \CD_m^{\rm lin}(U(t))$ can capture the optimal dissipation rate of the naturally existing $m$-order energy functional  $ \CE_m^{\rm lin}(U(t))$.  {For instance, when $m=0$, the direct energy estimate on system \eqref{VMB.li.eq} produces the only dissipation of the microscopic component $\{\FI-\FP\}u$, which is partially contained in the optimal form $\CD_0^{\rm lin}(U(t))$.}
\end{remark}

Furthermore, we can obtain the large-time behavior of solutions to
the linearized non-homogeneous Cauchy problem.
Formally, the solution to the Cauchy problem \eqref{VMB.li.eq} and \eqref{VMB.li.ID}  is denoted
by the summation of two parts,
\begin{eqnarray}
&& U(t)=U^{I}(t)+U^{II}(t),\label{def.lu}\\
&& U^{I}(t)=\semiG(t)U_0,\ \ U^{I}=[u^I,E^I,B^I], \label{def.lu1}\\
&& U^{II}(t)=\int_0^t\semiG(t-s)[h(s),0,0]ds,\ \
U^{II}=[u^{II},E^{II},B^{II}],\label{def.lu2}
\end{eqnarray}
where $\semiG(t)$ is the linear solution operator for the Cauchy
problem on the linearized homogeneous system corresponding to
\eqref{VMB.li.eq} with $h=0$. Notice that $U^{II}(t)$ is
well-defined because $[h(s),0,0]$ for any $0\leq s\leq t$
satisfies the last equation of \eqref{VMB.li.eq} due to the
fact that $\FP h(s)=0$ and hence
\begin{equation}  \notag
    \int_{\R^3}\FM^{1/2}h(s)d\xi=0.
\end{equation}
For brevity, we introduce the norms
$\|\cdot\|_{\CH^m}$, $\|\cdot\|_{\CZ_r}$ with $m\geq 0$ and $r\geq
1$ given by
\begin{equation*}
    \|U\|_{\CH^m}^2=\|u\|_{L^2_\xi(H^m_x)}^2+\|[E,B]\|_{H^m_x}^2,\ \  \|U\|_{\CZ_r}=\|u\|_{Z_r}+\|[E,B]\|_{L^r_x},
\end{equation*}
for $U=[u,E,B]$, and note $\CZ_2=\CH^0$.  Then, the third result to describe the time-decay property of the linearized solution is stated as follows.


\begin{theorem}\label{thm.ls}
Let $1\leq p, r\leq 2\leq q\leq \infty$,  $\si\geq 0$, and let $m\geq 0$ be an integer.
Suppose $\FP h=0$. Let
$U$ be defined in \eqref{def.lu}, \eqref{def.lu1} and
\eqref{def.lu2} as the solution to the Cauchy problem \eqref{VMB.li.eq}-\eqref{VMB.li.ID}.
Then, the first part $U^{I}$ corresponding to the solution of the
linearized homogeneous system satisfies
\begin{equation}
 \|\na_x^m U^I(t)\|_{\CZ_q}
\leq C(1+t)^{-\frac{3}{4}(\frac{1}{p}-\frac{1}{q})-\frac{m}{4}}
\|U_0\|_{\CZ_p}
 +C(1+t)^{-\frac{\si}{2}} \|\na_x^{m+[\si+3(\frac{1}{r}-\frac{1}{q})]_+}U_0\|_{\CZ_r},
 \label{thm.ls.1}
\end{equation}
for any $t\geq 0$, and the second part $U^{II}$ corresponding to
the solution of the linearized nonhomogeneous system  with vanishing
initial data satisfies
\begin{multline}
 \|\na_x^m U^{II}(t)\|_{\CZ_2}^2
\leq C\int_0^t (1+t-s)^{-\frac{3}{2}(\frac{1}{r}-\frac{1}{2})-m}\|\nu^{-1/2}h(s)\|_{Z_p}^2ds\\
+C \int_0^t
(1+t-s)^{-\si}\|\nu^{-1/2}\na_x^{m+[\si]_+}h(s)\|^2\,ds,
\label{thm.ls.2}
\end{multline}
for any $t\geq 0$.  {Here, $[\cdot]_+$ is defined  by}
\begin{equation}\label{def.plus} {
    [\si+3(\frac{1}{r}-\frac{1}{q})]_+
    =\left\{\begin{array}{ll}
      \si & \ \ \ \text{if $\si$ is integer and $r=q=2$},\\[3mm]
      {[}\si+3(\frac{1}{r}-\frac{1}{q}){]}+1&\ \ \ \text{otherwise},
     \end{array}\right.}
\end{equation}
 {where $[\cdot]$ means the integer part of the nonnegative argument.}
\end{theorem}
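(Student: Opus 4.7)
The plan is to work on the Fourier side in $x$ and view Corollary \ref{cor1.tfli} as a pointwise-in-$k$ differential inequality. Writing $\mu(k)=|k|^4/(1+|k|^2)^3$, Gronwall applied to \eqref{cor1.tfli.1} yields
\[
\CE(\hat U(t,k))\le e^{-\la\mu(k)t}\CE(\hat U(0,k))+C\int_0^t e^{-\la\mu(k)(t-s)}\|\nu^{-1/2}\hat h(s)\|_{L^2_\xi}^2\,ds,
\]
and the equivalence \eqref{thm.tfli.1} converts this into a pointwise-in-$k$ bound on $\|\hat u(t,k)\|_{L^2_\xi}^2+|[\hat E,\hat B](t,k)|^2$. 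Everything then reduces to extracting $L^p$-$L^q$ estimates from the Fourier multiplier $e^{-\la\mu(k)t}$, using that $\mu(k)\sim|k|^4$ as $|k|\to 0$ and $\mu(k)\sim|k|^{-2}$ as $|k|\to\infty$. Minkowski's inequality allows one to commute the $L^2_\xi$-norm with $L^q_x$ for $q\ge 2$, so the kinetic variable plays only a passive role throughout.

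For the homogeneous part $U^I(t)=\semiG(t)U_0$, I would split $\R^3_k$ into the low-frequency region $\{|k|\le 1\}$ and the high-frequency region $\{|k|\ge 1\}$. On the low-frequency piece, $\mu(k)\sim|k|^4$ and the multiplier behaves like a fourth-order heat kernel; the standard Hausdorff-Young-H\"older scheme
\[
\|Tf\|_{L^q_x}\le\|\widehat{Tf}\|_{L^{q'}_k}\le\|m(\cdot)\|_{L^s_k}\|\hat f\|_{L^{p'}_k}\le\|m(\cdot)\|_{L^s_k}\|f\|_{L^p_x},\qquad \tfrac1s=\tfrac1p-\tfrac1q,
\]
combined with the rescaling $k\mapsto t^{-1/4}k$ produces the sharp bound
\[
\bigl\|\mathbf{1}_{|k|\le 1}|k|^m e^{-c|k|^4 t}\bigr\|_{L^s_k}\lesssim(1+t)^{-\frac{3}{4}(\frac1p-\frac1q)-\frac{m}{4}},
\]
which yields the first term in \eqref{thm.ls.1}. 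On the high-frequency piece, $\mu(k)\ge c|k|^{-2}$ and the elementary inequality $e^{-a/x}\le C_\si(x/a)^{\si/2}$ with $x=|k|^2$, $a=\la t$ trades decay for regularity,
\[
\mathbf{1}_{|k|\ge 1}|k|^m e^{-\la\mu(k)t}\le C(1+t)^{-\si/2}|k|^{m+\si}.
\]
Inserting this into the analogous H\"older plus Hausdorff-Young scheme on $\{|k|\ge 1\}$ with $L^r_x$ replacing $L^p_x$ (this requires an additional factor $|k|^{-a}$ with $a>3(\frac1r-\frac1q)$ to obtain a convergent $k$-integral; summed with $\si$ and rounded exactly as in \eqref{def.plus}, this produces the integer Sobolev index $m+[\si+3(\frac1r-\frac1q)]_+$) delivers the second term in \eqref{thm.ls.1}.

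For the non-homogeneous part the target norm is $\CZ_2$, so by Plancherel
\[
\|\na_x^m U^{II}(t)\|_{\CZ_2}^2\le C\int_0^t\int_{\R^3_k}|k|^{2m}e^{-\la\mu(k)(t-s)}\|\nu^{-1/2}\hat h(s,k)\|_{L^2_\xi}^2\,dk\,ds.
\]
For each fixed $s$, split the $k$-integral into low and high frequencies and apply H\"older in $k$ with exponents $r'/2$ and $r'/(r'-2)$; bound $\bigl\|\,\|\nu^{-1/2}\hat h\|^2_{L^2_\xi}\bigr\|_{L^{r'/2}_k}$ by $\|\nu^{-1/2}h(s)\|_{Z_r}^2$ using Minkowski in $\xi$ followed by Hausdorff-Young in $x$, and bound the remaining exponentially weighted factor via the same rescaling argument on the low piece and via the regularity-loss exchange on the high piece. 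This delivers the two terms in \eqref{thm.ls.2}. The main obstacle I anticipate is the high-frequency bookkeeping: since the dissipation weakens at high frequencies rather than strengthening, the standard parabolic scheme is unavailable and one has to carefully match the number of lost derivatives to $m+[\si+3(\frac1r-\frac1q)]_+$, respecting the rounding \eqref{def.plus} that forces an integer-order Sobolev index on $U_0$. A more technical but routine point is checking that the Hausdorff-Young step for the kinetic component $u$ is compatible with the mixed Lebesgue structure of $\CZ_r$, which is handled by applying Minkowski in $\xi$ before taking the spatial Fourier transform.
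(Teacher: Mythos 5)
Your proposal is correct and takes essentially the same route as the paper: there, Corollary \ref{cor1.tfli} is integrated to give the pointwise bound $|\hat U(t,k)|\le Ce^{-\phi(k)t}|\hat U_0(k)|$ with $\phi(k)=\la|k|^4/\bigl(2(1+|k|^2)^3\bigr)$, and then exactly your low/high frequency splitting with Hausdorff--Young, Minkowski and H\"older, the $k\mapsto t^{-1/4}k$ scaling at low frequency and the regularity-loss trade at high frequency is carried out. The only cosmetic difference is that the paper packages this step as a general multiplier lemma (Lemma \ref{lem.glidecay} with $\si_+=4$, $\si_-=2$) and reuses it with $q=r=2$ for $U^{II}$, whereas you perform the Duhamel part by a direct Plancherel computation.
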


Finally, let us go back to the Cauchy problem on the nonlinear
Vlasov-Maxwell-Boltzmann system. The global existence and uniqueness of solutions are stated as follows.

\begin{theorem}\label{thm.non}
Let $N\geq 4$. Define $L^2$ energy functional $\CE_N(U(t))$ and its
dissipation rate $\CD_N(U(t))$ by
\begin{eqnarray}
    \CE_N(U(t))&\sim& \|u(t)\|_{H^N_{x,\xi}}^2+\|[E(t),B(t)]\|_{ {H^N_x}}^2,\label{thm.non.1}\\
    \CD_N(U(t)) &= &
    \|\nu^{1/2}\{\FI-\FP\}u(t)\|_{H^N_{x,\xi}}^2
    +\|\nu^{1/2}\na_xu(t)\|_{L^2_\xi(H^{N-1}_x)}^2\label{thm.non.2}\\
    &&+\|\na_x E(t)\|_{ {H^{N-2}_x}}^2+\|\na_x^2
    B(t)\|_{ {H^{N-3}_x}}^2.\nonumber
\end{eqnarray}
Suppose $f_0=\FM+\FM^{1/2}u_0\geq 0$. There indeed exists  {a continuous functional}
$\CE_N(U(t))$ given in \eqref{lem.non.p06} or \eqref{lem.non.p07} such that if initial data $U_0=[u_0,E_0,B_0]$ satisfies \eqref{VMB.eq}$_4$ for $t=0$ and  $\CE_N(U_0)$ is sufficiently small, then the nonlinear Cauchy
problem \eqref{VMB.eq}-\eqref{VMB.ID} admits a global solution
$U=[u,E,B]$ satisfying
\begin{eqnarray*}
&\dis f(t,x,\xi)=\FM+\FM^{1/2}u(t,x,\xi)\geq 0,\\
&\dis [u(t),E(t),B(t)]\in C([0,\infty);H^N_{x,\xi}\times  {H^N_x}\times
 {H^N_x}),
\end{eqnarray*}
and
\begin{equation*}
    \CE_N(U(t))+\la\int_0^t \CD_N(U(s))ds\leq \CE_N(U_0)
\end{equation*}
for any $t\geq 0$.
\end{theorem}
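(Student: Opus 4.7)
The plan is to reduce Theorem \ref{thm.non} to an a priori energy inequality of the form
\[
\frac{d}{dt}\CE_N(U(t)) + \la\,\CD_N(U(t)) \leq C\,\sqrt{\CE_N(U(t))}\,\CD_N(U(t)),
\]
valid as long as the smooth solution exists and $\CE_N(U(t))$ remains small. Combined with a standard local existence theorem in $H^N_{x,\xi}\times H^N_x\times H^N_x$ (by linear iteration on \eqref{VMB.li.eq}) and a continuity/bootstrap argument exploiting the smallness of $\CE_N(U_0)$ to force $C\sqrt{\CE_N(U(t))}\leq \la/2$ uniformly in $t$, the above inequality integrates to the desired global bound $\CE_N(U(t)) + \la\int_0^t\CD_N(U(s))\,ds \leq \CE_N(U_0)$. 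Nonnegativity of $f=\FM+\FM^{1/2}u$ is inherited from the transport-plus-positive-collision structure of the original system \eqref{VMB.o} under small smooth perturbation of $\FM$.

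The energy functional $\CE_N$ is constructed in two layers. The first layer handles pure spatial derivatives of all components and is the summed linear functional $\sum_{m=0}^N\CE_m^{\rm lin}(U(t))$ provided by Corollary \ref{cor2.tfli}; its dissipation already produces the field contributions $\|\na_x E\|_{H^{N-2}}^2+\|\na_x^2 B\|_{H^{N-3}}^2$ and the macroscopic gradient $\|\na_x\FP u\|_{L^2_\xi(H^{N-1}_x)}^2$ appearing in \eqref{thm.non.2}. The second layer adds the mixed-derivative microscopic term
\[
\sum_{\substack{|\al|+|\be|\leq N\\ |\be|\geq 1}} C_{|\be|}\,\|\pa_x^\al\pa_\xi^\be\{\FI-\FP\}u\|^2
\]
with a hierarchy of small weights $C_{|\be|}$ chosen downward in $|\be|$. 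Testing $\pa_x^\al\pa_\xi^\be$ of equation \eqref{VMB.eq}$_1$ against $\pa_x^\al\pa_\xi^\be\{\FI-\FP\}u$ produces the $\nu^{1/2}$-weighted velocity-derivative dissipation in \eqref{thm.non.2}, while the streaming commutator $[\pa_\xi^\be,\xi\cdot\na_x]$ falls at one lower $\be$-order and one higher $x$-order, so that downward induction in $|\be|$ absorbs it by the first layer together with lower-$|\be|$ copies of the second.

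The nonlinear source in these identities consists of $\Ga(u,u)$, $\tfrac12\xi\cdot E u$, and the Lorentz term $(E+\xi\times B)\cdot\na_\xi u$. Since $\FP\Ga(u,u)=0$, the collision contribution obeys the standard trilinear estimate of the Boltzmann theory, which together with Sobolev embedding $H^2_x\hookrightarrow L^\infty_x$ (available because $N\geq 4$) gives a bound of the form $C\sqrt{\CE_N}\,\CD_N$. The $E$-dependent terms are controlled by placing $E$ or its low-order derivatives in $L^\infty_x$ (bounded by $\sqrt{\CE_N}$) against the $\nu^{1/2}$-weighted microscopic dissipation and the macroscopic gradient, using that $\|\na_x E\|_{H^{N-2}}$ sits inside $\CD_N$. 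The truly delicate contribution is $(\xi\times B)\cdot\na_\xi u$.

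The main obstacle is the extremely weak dissipation of $B$: only $\|\na_x^2 B\|_{H^{N-3}}^2$ is available in $\CD_N$, so no positive power of $\|B\|$ or $\|\na_x B\|$ can be extracted from the dissipation side. At the plain $L^2$ level the cancellation $((\xi\times B)\cdot\na_\xi u,u)=0$ (integration by parts in $\xi$, since $\xi\times B$ is $\xi$-divergence-free) removes $B$ entirely. For the top-order derivatives one expands $\pa_x^\al\pa_\xi^\be[(\xi\times B)\cdot\na_\xi u]$ by Leibniz; the diagonal piece is killed by the same cancellation, and each off-diagonal piece contains a strictly lower-order derivative of $B$ placed in $L^\infty_x$ via $\|\pa_x^j B\|_{L^\infty_x}\lesssim \|\na_x^{j+1}B\|_{H^1_x}$ with $j\leq N-2$, which lives inside $\CD_N$ because $N\geq 4$, while the remaining factor of $\na_\xi u$ of order at most $N$ is supplied by the mixed-derivative part of $\CD_N$. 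Every such nonlinear pairing is thus dominated by $C\sqrt{\CE_N}\,\CD_N$; summing over all $|\al|+|\be|\leq N$ closes the a priori inequality, and Theorem \ref{thm.non} follows.
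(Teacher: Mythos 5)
Your overall architecture (linear-layer interactive functionals plus a weighted hierarchy of mixed $\pa^\al_\be\{\FI-\FP\}u$ norms, the cancellation $\na_\xi\cdot(\xi\times B)=0$ for the magnetic term, and a local-existence-plus-continuity argument) matches the paper's proof of Theorem \ref{thm.non} via Lemma \ref{lem.non}. However, there is a genuine gap in your treatment of the electric-field nonlinearity $\tfrac12\iint \xi\cdot E\,u^2\,dx\,d\xi$. Its purely macroscopic part equals, after the velocity integration, a constant multiple of $\int_{\R^3} E\cdot b\,(a+2c)\,dx$, and at zeroth order \emph{none} of the three factors $E$, $b$, $a+2c$ is controlled by the dissipation rate \eqref{thm.non.2}: $\CD_N$ only contains $\na_x E$, $\na_x\FP u$ and the microscopic part. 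Your proposed bound --- ``place $E$ or its low-order derivatives in $L^\infty_x$ against the microscopic dissipation and the macroscopic gradient'' --- therefore fails here: any H\"older/Sobolev arrangement yields at best a bound like $\CE_N(U(t))\,\CD_N(U(t))^{1/2}$ (or $\CE_N^{3/4}\CD_N^{3/4}$), and because the dissipation is degenerate ($\CD_N$ is not bounded below by $\la\CE_N$), such bounds cannot be absorbed into $\la\CD_N$ nor integrated in time (no decay rate is available for the nonlinear solution). This is precisely the ``last key point'' the paper singles out in the introduction.

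The paper closes this term differently: it substitutes $E$ from the momentum balance law \eqref{ME.eq.macro}$_2$, integrates by parts in $t$, and then replaces $\pa_t(a+2c)$ using \eqref{ME.eq.macro}$_1$ and \eqref{ME.eq.macro}$_3$. This produces the exact time derivative $\tfrac12\tfrac{d}{dt}\int |b|^2(a+2c)\,dx$ plus remainders genuinely bounded by $(\CE_N^{1/2}+\CE_N)\CD_N$; the exact derivative is then moved to the left-hand side by including the cubic correction $-\int|b|^2(a+2c)\,dx$ in the energy functional --- this is why \eqref{lem.non.p06}--\eqref{lem.non.p07} contain that term and why Lemma \ref{lem.non} assumes smallness of $\sup_t\|(a+2c)(t)\|_{L^\infty}$ (to keep the modified functional equivalent to the natural energy \eqref{thm.non.1}). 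Your proposed energy functional omits this correction, so the a priori inequality you aim for does not follow from the estimates you describe; to repair the argument you must incorporate this balance-law substitution (or an equivalent device) for the term $\int E\cdot b\,(a+2c)\,dx$. A secondary, more routine point: since your first layer reuses the linear functionals of Corollary \ref{cor2.tfli}, you still need to re-derive their decay inequalities for the nonlinear moment system \eqref{ME.eq.macro}, whose right-hand sides contain the quadratic terms $Ea+b\times B$ and $E\cdot b$; these are manageable in the same $(\CE_N^{1/2}+\CE_N)\CD_N$ fashion, as the paper does following \cite{DY-09VPB}, but they should be accounted for explicitly.
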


 {The decay rate of the solution obtained in Theorem \ref{thm.non} remains open. We shall discuss at the end of this paper the main difficulty of extending the linear decay property in Theorem \ref{thm.ls} to the time-decay of the nonlinear system.}

\subsection{Related work and key points in the proof}
As mentioned before, the method of constructing energy functionals or time-frequency functionals to deal with the global existence and time-decay estimates presented in this paper has been also extensively applied in \cite{Duan,DY-09VPB,DS-VPB,D-Hypo,DS-VMB}. Specifically, \cite{Duan} is a starting point of these series of work. In \cite{Duan}, some interactive energy functionals were constructed to consider the dissipation of the macroscopic part of the solution and also the global existence of solutions without any initial layer was proved. Later, the same thing was done in \cite{DY-09VPB} for the one-species Vlasov-Poisson-Boltzmann system, where the additional efforts are made to take care of the coupling effect from the self-consistent potential force through the Poisson equation. In order to investigate the optimal rate of convergence for the one-species Vlasov-Poisson-Boltzmann system, a new method on the basis of the linearized Fourier analysis was developed in \cite{DS-VPB} to study the time-decay property of the linear solution operator, where the key point is again to construct some proper time-frequency functionals so as to capture the optimal dissipation rate of the system. At the same time, \cite{D-Hypo} provided another method to study the exponential time-decay for the linear Boltzmann equation with a confining force by using the operator calculations instead of the Fourier analysis.

Recently, following a combination of \cite{DS-VPB} and \cite{S.VMB},
the optimal large-time behavior of the two-species
Vlasov-Maxwell-Boltzmann system was analyzed in \cite{DS-VMB}. The
main finding in \cite{DS-VMB} is that although the non-homogeneous
Maxwell system conserves the energy of the electromagnetic field,
the coupling of the Boltzmann equation with the Maxwell system can
generate some weak dissipation of the electromagnetic field which is
actually of the regularity-loss type. It should be pointed out that
even though the form of two-species Vlasov-Maxwell-Boltzmann system
looks more complicated than that of the case of one-species, the
study of global existence and time-decay rate is much more delicate
in the case of one-species because  the coupling term in the source
of the Maxwell system
\begin{equation*}
   -\int_{\R^3}\xi \FM^{1/2} u(t,x,\xi) d\xi=-b(t,x)
\end{equation*}
corresponds to the momentum component of the macroscopic part of the
solution which is degenerate with respect to the linearized operator
$\FL$. Essentially, it is  this kind of the macroscopic coupling
feature that leads to some different dissipation properties between
two-species and one-species for the Vlasov-Maxwell-Boltzmann system.

For the convenience of readers, let us list a table below to present in a clear way similarity and
difference of the dissipative and time-decay properties for three models: Boltzmann equation (BE), Vlasov-Poisson-Boltzmann system (VPB) and Vlasov-Maxwell-Boltzmann system (VMB).
In Table 1, 1-s means one-species and 2-s two-species. Corresponding to different models, $\CE(t,k)$ stands for some time-frequency functional equivalent with the naturally existing one and $\CD(t,k)$ denotes the optimal dissipation rate of $\CE(t,k)$ satisfying
\begin{equation*}
    \frac{d}{dt}\CE(t,k)+\la \CD(t,k)\leq 0
\end{equation*}
for all $t\geq 0$ and $k\in \R^3$. All estimates are written for the linearized homogeneous equation or system. For more details and proof of other models in the above table, interested readers can refer to \cite{D-Hypo,DS-VPB,DS-VMB}. {Notice that the 1-s VMB system decays faster than the 1-s VPB system due to the choice of initial data, that is, $E_0\in L^1_x$ is assumed in the case of 1-s VMB system, whereas in the case of 1-s VPB system, the electric field $E_0=-\na_x\phi_0$ with the potential force $\phi_0$ satisfying the Poisson equation $-\De_x\phi_0=\int_{\R^3}\FM^{1/2} u_0\,d\xi$ may not belong to $L^1_x$ under the assumption of $u_0\in Z_1\cap L^2$. We remark that the decay rate $(1+t)^{-1/4}$ for the 1-s VPB system can be improved to be $(1+t)^{-3/4}$ provided that $\na_x\phi_0\in L^1_x$ is additionally supposed.  Finally, it should also} be pointed out that the method developed in \cite{D-Hypo,DS-VPB,DS-VMB} and this paper could provide a good tool to deal with similar studies for other physical models with the structure involving not only the free transport operator but also the degenerately dissipative operator; see also \cite{Vi}.

\begin{table}[h]
\centering
\begin{tabular}{llll}
\hline &\vline\
$\CE(t,k)\sim$
&\vline\ $\CD(t,k)=$
&\vline\ $\|u(t)\|_{L^2}\leq $\\
\hline BE &\vline\  $\|\hat{u}\|_{L^2_\xi}^2$ &\vline \
$\begin{array}{l}
\\[-2mm]
\|\nu^{1/2}\{\FI-\FP\} {\hat{u}}\|_{L^2_\xi}^2\\
+\frac{|k|^2}{1+|k|^2}|[\hat{a},\hat{b},\hat{c}]|^2\\[-2mm]
\ \
\end{array}$
&\vline\ $C(1+t)^{-\frac{3}{4}}\|u_0\|_{Z_1\cap L^2}$ \\
\hline 1-s VPB &\vline\
$\|\hat{u}\|_{L^2_\xi}^2+\frac{|\hat{a}|^2}{|k|^2}$     &\vline\
$\begin{array}{l}
\\[-2mm]
\|\nu^{1/2}\{\FI-\FP\} {\hat{u}}\|_{L^2_\xi}^2\\
+\frac{|k|^2}{1+|k|^2}|[\hat{a},\hat{b},\hat{c}]|^2+|\hat{a}|^2\\[-2mm]
\ \ \
\end{array}$
&\vline\ $C(1+t)^{-\frac{1}{4}}\|u_0\|_{Z_1\cap L^2}$
\\
\hline 1-s VMB    &\vline\
$\|\hat{u}\|_{L^2_\xi}^2+|[\hat{E},\hat{B}]|^2$ &\vline\
$\begin{array}{l}
\\[-2mm]
   \|\nu^{1/2}\{\FI-\FP\}\hat{u}\|_{L^2_\xi}\\
   +\frac{|k|^2}{1+|k|^2}|[\hat{a},\hat{b},\hat{c}]|^2+|k\cdot\hat{E}|^2\\
   +\frac{|k|^2}{(1+|k|^2)^2}|\hat{E}|^2+\frac{|k|^4}{(1+|k|^2)^3}|\hat{B}|^2\\[-2mm]
   \ \
 \end{array}$
&\vline\ $\begin{array}{l}
           C(1+t)^{-\frac{3}{8}}\\
           \times (\|U_0\|_{\CZ_1}+\|\na_xU_0\|_{\CZ_2})
         \end{array}$
\\\hline 2-s VMB    &\vline\
$\|\hat{u}\|_{L^2_\xi}^2+|[\hat{E},\hat{B}]|^2$ &\vline\
$\begin{array}{l}
\\[-2mm]
   \|\nu^{1/2}\{\FI-\FP\}\hat{u}\|_{L^2_\xi}\\
   +\frac{|k|^2}{1+|k|^2}|[\hat{a}_\pm,\hat{b},\hat{c}]|^2+|k\cdot\hat{E}|^2\\
   +\frac{|k|^2}{(1+|k|^2)^2}(|\hat{E}|^2+|\hat{B}|^2)\\[-2mm]
   \ \
 \end{array}$
&\vline\ $\begin{array}{l}
           C(1+t)^{-\frac{3}{4}}\\
           \times (\|U_0\|_{\CZ_1}+\|\na_x^2U_0\|_{\CZ_2})
         \end{array}$ \\
  \hline\\
\end{tabular}
\caption{Dissipative and time-decay properties of different models}
\end{table}

Since the current work is a further development in the study of the Vlasov-Maxwell-Boltzmann system as in \cite{DS-VMB}, we omit the detailed literature review for brevity, and readers can refer to \cite{DS-VMB} and reference therein. Here, we only mention some of them. The spectral analysis and global existence for the Boltzmann equation with near-equilibrium initial data was given by \cite{Ukai-1974}. For the same topic, thirteen moments method and global existence was found by \cite{Ka-BE13}. The energy method of the Boltzmann equation was developed independently in \cite{Guo-IUMJ,Guo2,GuoVMB} and \cite{Liu-Yu-Shock,Liu-Yang-Yu, YZ-CMP} by using the different macro-micro decomposition. The almost exponential rate of convergence of the Boltzmann equation on torus for large initial data was obtained in \cite{DV} under some additional regularity assumption. \cite{SG}
provided a very simple proof of \cite{DV} in the framework of small perturbation. The diffusive limit of the two-species Vlasov-Maxwell-Boltzmann system over the torus was studied in \cite{Jang}. It could be interesting to consider the same issue as in \cite{Jang} for the one-species Vlasov-Maxwell-Boltzmann system because of its weaker dissipation property.

In what follows, let us explain some new technical points in the proof of our main results which are different from previous work. The first one is about the dissipation estimate on the momentum component $b(t,x)$ in the macroscopic part $\FP u$ in Theorem \ref{thm.efli}. Recall that in the previous work \cite{DY-09VPB} and \cite{D-Hypo}, the dissipation estimate of $b(t,x)$ was based on \eqref{ME.h}$_1$ and \eqref{ME.bl}$_2$. This fails in the case of one-species Vlasov-Maxwell-Boltzmann system because it is impossible to control a term such as
\begin{equation*}
    \sum_{ij}\de_{ij}\int c(\pa_iE_j+\pa_j E_i)dx.
\end{equation*}
Instead, the right way is to make estimates on \eqref{ME.eq.macro}$_4$. Therefore, one can get the macroscopic dissipation estimate \eqref{thm.efli.p3} with the dissipation of $E$ multiplied by a small constant on the right-hand side. The second key point is about the dissipation of the electromagnetic field. Different from \cite{DS-VMB}, although there is no cancelation in the case of one-species, one can still design some interactive energy functional $\CE_N^{{\rm lin},2}(U(t))$ to capture the weaker dissipation rate
\begin{equation*}
    \sum_{1\leq |\al|\leq N-1}\|\pa^\al E\|^2+ \sum_{2\leq |\al|\leq N-1}\|\pa^\al B\|^2.
\end{equation*}
The third key point is about the time-decay estimate on the linearized solution operator. In fact, for a general frequency function $\phi(k)$ which is of the regularity-loss type as in Lemma \ref{lem.glidecay}, one can repeatedly apply the Minkowski inequality to make interchanges between frequency and space variables so that the more general $L^p$-$L^q$ type time-decay than in Theorem \ref{thm.ls} can be obtained.  The last key point is about the estimate on the nonlinear term $\xi\cdot E u$ in the proof of Lemma \ref{lem.non} concerning the a priori estimates of solutions. {In particular, it is impossible to bound $\iint \xi \cdot E (\FP u)^2\,dxd\xi$ by using $\CE_N(U(t))^{1/2}\CD_N(U(t))$ up to a constant since both $E$ and $\FP u$ do not enter into the dissipation rate $\CD_N(U(t))$ given in \eqref{thm.non.2}. Instead, we first take the velocity integration and then use the macroscopic balance laws \eqref{ME.bl} so as to obtain an estimate as}
$$
 {\iint \xi \cdot E (\FP u)^2\,dxd\xi\leq \frac{d}{dt}\int |b|^2(a+2c)\,dx + C \left[\CE_N(U(t))^{1/2}+\CE_N(U(t))\right]\CD_N(U(t)).}
$$
We remark that this also has been observed  {in \cite{DY-09VPB}} in the study of the one-species Vlasov-Poisson-Boltzmann system.

\subsection{Notations}\label{sec.notation}

Throughout this paper,  $C$  denotes
some positive (generally large) constant and $\la$ denotes some positive (generally small) constant, where both $C$ and
$\la$ may take different values in different places. In addition,
$A\sim B$ means $\la A\leq B \leq \frac{1}{\la} A$ for a generic
constant $0<\la<1$. For any
integer $m\geq 0$, we use $H^m_{x,\xi}$, $H^m_x$, $H^m_\xi$ to denote the
usual Hilbert spaces $H^m(\R^3_x\times\R^3_\xi)$, $H^m(\R^3_x)$,
$H^m(\R^3_\xi)$, respectively, and $L^2$, $L^2_x$, $L^2_\xi$ are
used for the case when $m=0$. When without confusion, we use $H^m$ to denote $H^m_x$ and use $L^2$ to denote $L^2_x$
 or $L^2_{x,\xi}$. For a Banach space $X$,
$\|\cdot\|_{X}$ denotes the corresponding norm, while $\|\cdot\|$
always denotes the norm $\|\cdot\|_{L^2}$ for
simplicity.
For $r\geq 1$, we also define the standard time-space mixed Lebesgue
space $Z_r=L^2_\xi(L^r_x)=L^2(\R^3_\xi;L^r(\R^3_x))$ with the norm
\begin{equation*}
\|g\|_{Z_r}=\left(\int_{\R^3}\left(\int_{\R^3}
    |g(x,\xi)|^rdx\right)^{2/r}d\xi\right)^{1/2},\ \ g=g(x,\xi)\in Z_r.
\end{equation*}
For
multi-indices $\al=[\al_1,\al_2,\al_3]$ and
$\be=[\be_1,\be_2,\be_3]$, we denote
\begin{equation*}
 \pa^{\al}_\be=\pa_{x_1}^{\al_1}\pa_{x_2}^{\al_2}\pa_{x_3}^{\al_3}
    \pa_{\xi_1}^{\be_1}\pa_{\xi_2}^{\be_2}\pa_{\xi_3}^{\be_3}.
\end{equation*}
The length of $\al$ is $|\al|=\al_1+\al_2+\al_3$ and the length of
$\be$ is $|\be|=\be_1+\be_2+\be_3$. For simplicity,
we also use $\pa_j$ to denote $\pa_{x_j}$ for each $j=1,2,3$. For an integrable function $g: \R^3\to\R$, its Fourier transform
is defined by
\begin{equation*}
  \widehat{g}(k)= \CF g(k)= \int_{\R^3} e^{-2\pi i x\cdot k} g(x)dx, \quad
  x\cdot
   k:=\sum_{j=1}^3 x_jk_j,
   \quad
   k\in\R^3,
\end{equation*}
where $i =\sqrt{-1}\in \mathbb{C}$ is the imaginary unit. For two
complex vectors $a,b\in\mathbb{C}^3$, $(a\mid b)$ denotes the dot product of $a$ with the complex conjugate of $b$ over the complex field. $\lag \na_x\rag=(1+|\na_x|^2)^{1/2}$ is defined in terms of the Fourier transform.

The rest of this paper is arranged as follows. In Section \ref{sec.moment} we present some basic property of the linearized collision operator and derive some macroscopic moment equations. In Section \ref{sec.linear} we study the linearized non-homogeneous Vlasov-Maxwell-Boltzmann system in order to prove Theorem \ref{thm.efli}, Theorem \ref{thm.tfli} and Theorem \ref{thm.ls}. Finally, we prove in Section \ref{sec.nonlinear} Theorem \ref{thm.non} for the global existence of solutions to the nonlinear Cauchy problem.

\section{Moment equations}\label{sec.moment}

It is easy to see that  $\FL u$ and $\Ga(u,u)$ are given by
\begin{eqnarray}
  \FL u &=& \frac{1}{\sqrt{\FM}}\left[Q(\FM,\sqrt{\FM}u)+Q(\sqrt{\FM}u,\FM)\right],\label{def.L} \\
  \Ga(u,u)&=&
  \frac{1}{\sqrt{\FM}}Q(\sqrt{\FM}u,\sqrt{\FM}u).\label{def.Ga}
\end{eqnarray}
For the linearized collision operator $\FL$, one has the following standard facts \cite{CIP-Book}. $\FL$ can be split as $\FL u=-\nu(\xi) u+K u$, where
 the collision frequency is given by
\begin{equation}\label{def.nu}
\nu(\xi)=\iint_{\R^3\times
  S^{2}}|(\xi-\xi_\ast)\cdot\om|\FM(\xi_\ast)\, d\om
  d\xi_\ast.
\end{equation}
 Notice that $\nu(\xi)\sim (1+|\xi|^2)^{1/2}$.  The null space of $\FL$ is given by
\begin{equation}\notag
    \CN={\rm span}\left\{\FM^{1/2},\xi_i\FM^{1/2}~(1\leq i\leq 3),
    |\xi|^2\FM^{1/2} \right\}.
\end{equation}
The linearized collision operator $\FL$ is non-positive and further $-\FL$ is known to be locally coercive in
the sense that there is a constant $\la_0>0$ such that \cite{CIP-Book}:
\begin{equation}\label{coerc}
    -\int_{\R^3}u\FL u\,d\xi\geq \la_0\int_{\R^3}\nu(\xi)|\{\FI-\FP\}u|^2d\xi,
\end{equation}
where, for fixed $(t,x)$, $\FP$ denotes the orthogonal projection from
$L^2_\xi$ to $\CN$. Given any $u(t,x,\xi)$, one can write $\FP$ in \eqref{coerc} as
\begin{equation}\label{def.P}
    \FP u= \{a(t,x)+b(t,x)\cdot \xi+c(t,x)(|\xi|^2-3)\}\FM^{1/2}.
\end{equation}
Since $\FP$ is a projection, the coefficient functions $a(t,x)$, $b(t,x)\equiv [b_1(t,x),b_2(t,x),b_3(t,x)]$ and $c(t,x)$ depend on $u(t,x,\xi)$ in terms of
\begin{equation}
\notag
\left\{\begin{split}
  \dis   & a= \int_{\R^3} \FM^{1/2} ud\xi= \int_{\R^3} \FM^{1/2} \FP ud\xi,
  \\
  \dis  & b_i=\int_{\R^3}\xi_i \FM^{1/2}ud\xi
=\int_{\R^3} \xi_i \FM^{1/2}\FP ud\xi,\ \ 1\leq i\leq 3,
\\
  \dis & c= \frac{1}{6}\int_{\R^3}(|\xi|^2-3) \FM^{1/2} ud\xi
= \frac{1}{6}\int_{\R^3} (|\xi|^2-3) \FM^{1/2} \FP ud\xi.
    \end{split}\right.
\end{equation}

To derive evolution equations of $a$, $b$ and $c$, we start from the local balance laws of the original system \eqref{VMB.o} to obtain
\begin{equation*}
    \left\{\begin{split}
    &\dis \pa_t \int_{\R^3} f d\xi+\na_x\cdot \int_{\R^3} \xi fd\xi=0,\\
    &\dis \pa_t \int_{\R^3} \xi f d\xi +\na_x \cdot \int_{\R^3} \xi \otimes \xi f d\xi -E \int_{\R^3} f d\xi -\int_{\R^3} \xi f d\xi \times B=0,\\
    &\dis \pa_t \int_{\R^3} \frac{1}{2}|\xi|^2 f d\xi+\na_x \cdot \int_{\R^3} \frac{1}{2}|\xi|^2 \xi f d\xi-E\cdot \int_{\R^3} \xi f d\xi=0.
      \end{split}\right.
\end{equation*}
The above system implies
\begin{equation}\label{ME.bl}
    \left\{\begin{split}
    &\dis \pa_t a+\na_x\cdot b=0,\\
    &\dis \pa_t b +\na_x (a+2c)+\na_x \cdot \int_{\R^3} \xi \otimes \xi \FM^{1/2} \{\FI-\FP\}u d\xi
    -E (1+a) -b \times B=0,\\
    &\dis \pa_t c+\frac{1}{3}\na_x\cdot b+\frac{1}{6}\na_x \cdot \int_{\R^3} |\xi|^2 \xi\FM^{1/2} \{\FI-\FP\}u dx -\frac{1}{3}E\cdot b=0.
      \end{split}\right.
\end{equation}
As in \cite{DS-VPB}, define
\begin{equation*}
    \Theta_{ij}(u)=\int_{\R^3}(\xi_i\xi_j -1)\FM^{1/2} ud\xi,\ \ \Lambda_i(u)=\frac{1}{10}\int_{\R^3} (|\xi|^2-5)\xi_i\FM^{1/2} ud\xi
\end{equation*}
for $1\leq i,j\leq 3$. Applying them to the first equation of \eqref{VMB.eq}, one has
\begin{equation}\label{ME.h}
\left\{\begin{split}
&\pa_t [\Theta_{ij}(\{\FI-\FP\}u)+2c\de_{ij}]+\pa_ib_j+\pa_j b_i=\Theta_{ij}(\ell+g),\\
&\pa_t \Lambda_i(\{\FI-\FP\}u)+\pa_i  c=\Lambda_i(\ell+g),
\end{split}\right.
\end{equation}
where $\de_{ij}$ means Kronecker delta, and
\begin{equation}\label{def.ell.g}
 \left\{\begin{split}
  \ell &= -\xi\cdot \na_x\cdot  \{\FI-\FP\}u+\FL u,\\
  g &= \frac{1}{2}\xi\cdot E u-(E+\xi\times B)\cdot \na_\xi u+\Ga(u,u).
  \end{split}\right.
\end{equation}
One can replace $\pa_t c$ in \eqref{ME.h}$_1$ by using \eqref{ME.bl}$_3$ so that
\begin{multline*}
\pa_t \Theta_{ij}(\{\FI-\FP\}u)+\pa_i b_j+\pa_j b_i-\frac{2}{3}\de_{ij}\na_x\cdot b\\
-\frac{10}{3}\de_{ij}\na_x\cdot \Lambda(\{\FI-\FP\}u)
=\Theta_{ij}(\ell+g)
-\frac{2}{3}\de_{ij}E\cdot b.
\end{multline*}
In a summary, we obtained the following moment system
\begin{equation}\label{ME.eq.macro}
\left\{\begin{split}
&\dis \pa_t a+\na_x\cdot b=0,\\
&\dis \pa_t b +\na_x (a+2c)+\na_x \cdot\Theta(\{\FI-\FP\}u)
    -E =Ea+b \times B,\\
&\dis \pa_t c+\frac{1}{3}\na_x\cdot b+\frac{5}{3}\na_x \cdot  \Lambda(\{\FI-\FP\}u)=\frac{1}{3}E\cdot b,\\
&\dis \pa_t \Theta_{ij}(\{\FI-\FP\}u)+\pa_i b_j+\pa_j b_i-\frac{2}{3}\de_{ij}\na_x\cdot b-\frac{10}{3}\de_{ij}\na_x\cdot \Lambda(\{\FI-\FP\}u)\\
&\dis\hspace{2cm}=\Theta_{ij}(\ell+g)
-\frac{2}{3}\de_{ij}E\cdot b,\ 1\leq i,j\leq 3,\\
&\pa_t \Lambda_i(\{\FI-\FP\}u)+\pa_i  c=\Lambda_i(\ell+g),\ 1\leq i\leq 3.
  \end{split}\right.
\end{equation}
On the other hand, the Maxwell system in \eqref{VMB.ID} is equivalent with
\begin{equation*}
     \left\{\begin{split}
   &\dis\pa_t E-\na_x\times B=-b,\\
   &\dis\pa_t B+\na_x\times E =0,\\
   &\dis\na_x\cdot E=0,\ \ \na_x\cdot B=0.
  \end{split}\right.
\end{equation*}
Finally, we should point out that the key analysis of all results in this paper is based on the above moment equations coupled with the Maxwell system.

\section{Linear non-homogeneous system}\label{sec.linear}

In this section we consider the Cauchy problem \eqref{VMB.li.eq} and \eqref{VMB.li.ID} on the linearized Vlasov-Maxwell-Boltzmann system. For convenience of readers, recall it by
\begin{equation}\label{li.eq}
    \left\{\begin{array}{l}
      \dis \pa_t u+\xi\cdot \na_x u-\xi\FM^{1/2}\cdot E
      =\FL u +h,\\[3mm]
      \dis\pa_t E-\na_x\times B=-\int_{\R^3}\xi \FM^{1/2} u d\xi,\\[3mm]
      \dis\pa_t B +\na_x \times E =0,\\[3mm]
      \dis\na\cdot E=\int_{\R^3}\FM^{1/2} u d\xi,\ \ \na_x\cdot B=0,
    \end{array}\right.
\end{equation}
with
\begin{eqnarray}\label{li.ID}
&\dis u(0,x,\xi)=u_{0}(x,\xi),\ \ E(0,x)=E_0(x),\ \ B(0,x)=B_0(x).
\end{eqnarray}
Here, $h=h(t,x,\xi)$ is a given non-homogenous source term, satisfying $\FP h=0$.

\subsection{$L^2$ energy functional and its optimal dissipation rate}\label{sec.ef}
In this subsection we shall prove Theorem \ref{thm.efli}. Before that,
similar to obtain \eqref{ME.eq.macro}, one can also derive the following moment equations corresponding to the linear equation \eqref{li.eq}:
\begin{equation}\label{ME.lieq.macro}
\left\{\begin{split}
&\dis \pa_t a+\na_x\cdot b=0,\\
&\dis \pa_t b +\na_x (a+2c)+\na_x \cdot\Theta(\{\FI-\FP\}u)
    -E =0,\\
&\dis \pa_t c+\frac{1}{3}\na_x\cdot b+\frac{5}{3}\na_x \cdot  \Lambda(\{\FI-\FP\}u)=0,\\
&\dis \pa_t \Theta_{ij}(\{\FI-\FP\}u)+\pa_i b_j+\pa_j b_i-\frac{2}{3}\de_{ij}\na_x\cdot b-\frac{10}{3}\de_{ij}\na_x\cdot \Lambda(\{\FI-\FP\}u)=\Theta_{ij}(\ell+h),\\
&\pa_t \Lambda_i(\{\FI-\FP\}u)+\pa_i  c=\Lambda_i(\ell+h),
  \end{split}\right.
\end{equation}
where $1\leq i,j\leq 3$, and as in \eqref{def.ell.g}, $\ell$ still denotes
\begin{equation*}
     \ell = -\xi\cdot \na_x\cdot  \{\FI-\FP\}u+\FL u.
\end{equation*}
The Maxwell system also takes the form of
\begin{equation}\label{ME.lieq.maxwell}
     \left\{\begin{split}
   &\dis\pa_t E-\na_x\times B=-b,\\
   &\dis\pa_t B+\na_x\times E =0,\\
   &\dis\na_x\cdot E=0,\ \ \na_x\cdot B=0.
  \end{split}\right.
\end{equation}

\medskip

\noindent{\bf Proof of Theorem \ref{thm.efli}:} Let $N\geq 3$. First of all, a usual energy estimate on \eqref{li.eq} gives
\begin{equation}\label{thm.efli.p1}
    \frac{1}{2}\sum_{|\al|\leq N}\frac{d}{dt}(\|\pa^\al u\|^2+\|\pa^\al [E,B]\|^2)
+\la \sum_{|\al|\leq N}\|\nu^{1/2}\pa^\al \{\FI-\FP\}u\|^2\leq
    C\sum_{|\al|\leq N}\|\nu^{-1/2}\pa^\al g\|^2.
\end{equation}
As in \cite{D-Hypo,DY-09VPB} or \cite{DS-VMB}, one can further deduce from \eqref{ME.lieq.macro} the dissipation of $a$, $b$ and $c$. In fact,
let $\eps_1>0$, $\eps_2>0$ be arbitrary constants to be chosen later. From \eqref{ME.lieq.macro}$_5$ and \eqref{ME.lieq.macro}$_3$, it follows that
\begin{multline*}
\frac{d}{dt}\sum_{|\al|\leq N-1}\int_{\R^3}\na_x\pa^\al c\cdot \Lambda(\pa^\al\{\FI-\FP\}u)dx
+\la \sum_{|\al|\leq N-1}\|\na_x\pa^\al c\|^2\\
\leq \eps_1 \sum_{|\al|\leq N-1}\|\na_x \pa^\al b\|^2
+\frac{C}{\eps_1}\left(\sum_{|\al|\leq N}
\|\pa^\al\{\FI-\FP\}u\|^2+\sum_{|\al|\leq N-1}\|\nu^{-1/2}\pa^\al g\|^2\right).
\end{multline*}
\eqref{ME.lieq.macro}$_4$ and \eqref{ME.lieq.macro}$_2$ imply
\begin{multline*}
\frac{d}{dt}\sum_{|\al|\leq N-1}\sum_{ij=1}^3\int_{\R^3}
(\pa_i\pa^\al b_j+\pa_j\pa^\al b_i-\frac{2}{3}\de_{ij}\na_x\cdot \pa^\al b)
\Theta_{ij}(\pa^\al\{\FI-\FP\}u)dx\\
+\la \sum_{|\al|\leq N-1}\|\na_x \pa^\al b\|^2
\leq \eps_2\left(\sum_{|\al|\leq N-1}\|\na_x\pa^\al [a,c]\|^2+\sum_{|\al|\leq N-2}\|\na_x\pa^\al E\|^2\right)\\
+\frac{C}{\eps_2}\left(\sum_{|\al|\leq N}
\|\pa^\al\{\FI-\FP\}u\|^2+\sum_{|\al|\leq N-1}\|\nu^{-1/2}\pa^\al g\|^2\right).
\end{multline*}
It holds from \eqref{ME.lieq.macro}$_2$ and \eqref{ME.lieq.macro}$_1$ that
\begin{multline*}
\frac{d}{dt}\sum_{|\al|\leq N-1}\int_{\R^3}\na_x\pa^\al a\cdot \pa^\al b dx +\la \sum_{|\al|\leq N}\|\pa^\al a\|^2\\
\leq C \sum_{|\al|\leq N-1}\|\na_x\pa^\al [b,c]\|^2 +C\sum_{|\al|\leq N-1}
\|\na_x\pa^\al\{\FI-\FP\}u\|^2.
\end{multline*}
Define
\begin{equation}
\begin{split}\label{thm.efli.p1-def}
  \CE^{{\rm lin},1}_N(U(t)) =&\sum_{|\al|\leq N-1}\int_{\R^3}\na_x\pa^\al c\cdot \Lambda(\pa^\al\{\FI-\FP\}u)dx \\
  &+\sum_{|\al|\leq N-1}\sum_{ij=1}^3\int_{\R^3}
(\pa_i\pa^\al b_j+\pa_j\pa^\al b_i-\frac{2}{3}\de_{ij}\na_x\cdot \pa^\al b)
\Theta_{ij}(\pa^\al\{\FI-\FP\}u)dx\\
&+\kappa_1\sum_{|\al|\leq N-1}\int_{\R^3}\na_x\pa^\al a\cdot \pa^\al b dx,
\end{split}
\end{equation}
for some small constant $\kappa_1>0$. Therefore, by taking $\kappa_1>0$ small enough and then letting $\eps_1>0$ and $\eps_2>0$ be small enough, the dissipation of $a$, $b$ and $c$ can be obtained by the following inequality
\begin{multline}\label{thm.efli.p2}
\frac{d}{dt}\CE^{{\rm lin},1}_N(U(t))+\la \sum_{|\al|\leq N-1}\|\na_x\pa^\al [a,b,c]\|^2
+\|a\|^2\\
\leq \eps_2\sum_{|\al|\leq N-2}\|\na_x\pa^\al E\|^2
+\frac{C}{\eps_2} \left(\sum_{|\al|\leq N}
\|\pa^\al\{\FI-\FP\}u\|^2+\sum_{|\al|\leq N-1}\|\nu^{-1/2}\pa^\al g\|^2\right),
\end{multline}
where $\eps_2>0$ is still left to be chosen later on.

The key part is to estimate the dissipation of $E$ and $B$. We claim that
\begin{multline}\label{thm.efli.p3}
-\frac{d}{dt}\sum_{1\leq |\al|\leq N-1}\int_{\R^3}\pa^\al E\cdot \pa^\al b dx
+\la \sum_{1\leq |\al|\leq N-1}\|\pa^\al E\|^2\\
\leq \eps_3 \sum_{1\leq |\al|\leq N-2}\|\pa^\al \na_x\times B\|^2+\frac{C}{\eps_3}\sum_{|\al|\leq N-1}\|\na_x\pa^\al b\|^2\\
+C \sum_{1\leq |\al|\leq N-1}\left(\|\na_x\pa^\al [a,c]\|^2+\|\na_x \pa^\al \{\FI-\FP\}u\|^2\right),
\end{multline}
with $\eps_3>0$ to be chosen, and
\begin{multline}\label{thm.efli.p4}
-\frac{d}{dt}\sum_{1\leq |\al|\leq N-2}\int_{\R^3}\pa^\al \na_x\times B\cdot \pa^\al E dx +\la \sum_{1\leq |\al|\leq N-2}\|\pa^\al \na_x\times B\|^2\\
\leq \sum_{1\leq |\al|\leq N-2}\|\pa^\al \na_x\times E\|^2
+C \sum_{1\leq |\al|\leq N-2}\|\pa^\al b\|^2.
\end{multline}
For this time, suppose that \eqref{thm.efli.p3} and \eqref{thm.efli.p4} hold true. Define
\begin{equation}\label{thm.efli.p5}
  \CE^{{\rm lin},2}_N(U(t)) =-\sum_{1\leq |\al|\leq N-1}\int_{\R^3}\pa^\al E\cdot \pa^\al b dx
  -\kappa_2\sum_{1\leq |\al|\leq N-2}\int_{\R^3}\pa^\al \na_x\times B\cdot \pa^\al E dx
\end{equation}
for some constant $\kappa_2>0$. Then, by taking $\kappa_2>0$ and further $\eps_3>0$ both small enough, it follows from \eqref{thm.efli.p3} and \eqref{thm.efli.p4} that
\begin{multline}\label{thm.efli.p6}
\frac{d}{dt}\CE^{{\rm lin},2}_N(U(t))+\la \sum_{1\leq |\al|\leq N-1}\|\pa^\al E\|^2+
\la \sum_{2\leq |\al|\leq N-1}\|\pa^\al B\|^2\\
\leq C\|\na_xb\|^2+C \sum_{1\leq |\al|\leq N-1}\left(\|\na_x\pa^\al [a,b,c]\|^2+\|\na_x \pa^\al \{\FI-\FP\}u\|^2\right),
\end{multline}
which is the desired  dissipation estimate of the electromagnetic field $E$ and $B$. Now, define
\begin{equation}\label{thm.efli.p7}
   \CE^{{\rm lin}}_N(U(t))=\sum_{|\al|\leq N}(\|\pa^\al u\|^2+\|\pa^\al [E,B]\|^2)
   +\kappa_4\left(\CE^{{\rm lin},1}_N(U(t)) + \kappa_3\CE^{{\rm lin},2}_N(U(t)) \right)
\end{equation}
with constants $\kappa_3>0$ and $\kappa_4>0$, where $\CE^{{\rm lin},1}_N(U(t))$, $\CE^{{\rm lin},2}_N(U(t))$ are defined in \eqref{thm.efli.p1-def} and \eqref{thm.efli.p5}.  In the same way as before, by taking properly small constants $\kappa_3>0$, $\eps_2>0$ and $\kappa_4>0$ in turn, \eqref{thm.efli.1} follows from the linear combination of \eqref{thm.efli.p1}, \eqref{thm.efli.p2} and \eqref{thm.efli.p6}.
Moreover,  it is easy to verify that $\CE^{{\rm lin}}_N(U(t))$ is the desired $L^2$ energy functional satisfying \eqref{thm.efli.00} and $\CD^{{\rm lin}}_N(U(t))$ is given by \eqref{thm.efli.01}. Here, one has to check
\begin{equation}\label{thm.efli.p7.1}
    \sum_{2\leq |\al|\leq N-1}\|\pa^\al B\|^2\leq C\sum_{1\leq |\al|\leq N-2}\|\pa^\al \na_x\times B\|^2.
\end{equation}
In fact, by taking $\al$ with $2\leq |\al|\leq N-1$ and using $B=-\De_x^{-1}\na_x\times\na_x\times B$ due to $\na_x\cdot B=0$, it holds
\begin{equation*}
    \pa^\al B=-\pa^\al \De_x^{-1}\na_x\times\na_x\times B=-\pa^{\al-\ga_i} \pa_i\De_x^{-1}\na_x\times\na_x\times B,
\end{equation*}
for some $\ga_i$ $(1\leq i\leq 3)$ with $|\ga_i|=1$. Since $\pa_i\De_x^{-1}\pa_j$ for any $1\leq i,j\leq 3$ is a bounded operator from $L^p$ to itself with $1<p<\infty$,
\begin{equation*}
   \|\pa^\al B\|\leq C\|\pa^{\al-\ga_i} \na_x\times B\|.
\end{equation*}
Hence, \eqref{thm.efli.p7.1} follows from taking summation of the above inequality over $2\leq |\al|\leq N-1$.

Now, the rest is to prove \eqref{thm.efli.p3} and \eqref{thm.efli.p4}. Take $\al$ with $1\leq |\al|\leq N-1$. By using \eqref{ME.lieq.macro}$_2$ to replace $E$ and then using \eqref{ME.lieq.maxwell}$_1$ to replace $\pa_t E$, one can compute
\begin{eqnarray}\label{thm.efli.p8}
  \|\pa^\al E\|^2 &=& \int_{\R^3} \pa^\al E\cdot \pa^\al E dx\\
  &=&\int_{\R^3} \pa^\al E\cdot \pa^\al [\pa_t b+\na_x (a+2c)+\na_x\Theta(\{\FI-\FP\}u)] dx\nonumber \\
  &=&\frac{d}{dt}\int_{\R^3}\pa^\al E\cdot \pa^\al b dx -\int_{\R^3}\pa^\al \pa_t E\cdot \pa^\al b dx\nonumber \\
  &&+\int_{\R^3} \pa^\al E\cdot \pa^\al [\na_x (a+2c)+\na_x\Theta(\{\FI-\FP\}u)] dx\nonumber \\
  &=&\frac{d}{dt}\int_{\R^3}\pa^\al E\cdot \pa^\al b dx +\int_{\R^3}\pa^\al (b-\na_x\times B)\cdot \pa^\al b dx\nonumber \\
  &&+\int_{\R^3} \pa^\al E\cdot \pa^\al [\na_x (a+2c)+\na_x\Theta(\{\FI-\FP\}u)] dx.\nonumber
\end{eqnarray}
Then,  \eqref{thm.efli.p3} follows from the above identity  after taking summation over $1\leq |\al|\leq N-1$ and further applying the Cauchy-Schwarz inequality and integration by parts. In fact, it suffices to consider the second term on the r.h.s.~of \eqref{thm.efli.p8}. It can be estimated by
\begin{multline*}
   \sum_{1\leq |\al|\leq N-1}\int_{\R^3}\pa^\al (b-\na_x\times B)\cdot \pa^\al b dx =\sum_{1\leq |\al|\leq N-1}\|\pa^\al b\|^2\\
 - \sum_{1\leq |\al|\leq N-2}\int_{\R^3}\pa^\al \na_x\times B\cdot \pa^\al b dx
 + \sum_{|\al|= N-1}\int_{\R^3}\pa^{\al-\ga_i} \na_x\times B\cdot \pa^{\al+\ga_i}  b dx\\
 \leq  \eps_3 \sum_{1\leq |\al|\leq N-2}\|\pa^\al \na_x\times B\|^2+\frac{C}{\eps_3}\sum_{|\al|\leq N-1}\|\na_x\pa^\al b\|^2,
\end{multline*}
where as before $0<\eps_3\leq 1$ is small to be chosen, and $\ga_i$ denotes a multi-index with $|\ga_i|=1$ for some $1\leq i\leq 3$. To prove \eqref{thm.efli.p4}, take $\al$ with $1\leq |\al|\leq N-2$. By using \eqref{ME.lieq.maxwell}$_1$ to replace $\na_x\times B$ and then using  \eqref{ME.lieq.maxwell}$_2$ to replace $\pa_t B$, one has
\begin{multline*}
  \|\pa^\al \na_x\times B\|^2 = \int_{\R^3}\pa^\al \na_x\times B\cdot \pa^\al \na_x\times B dx
    =\int_{\R^3}\pa^\al \na_x\times B\cdot \pa^\al (\pa_t E+b) dx\\
    =\frac{d}{dt}\int_{\R^3}\pa^\al \na_x\times B\cdot \pa^\al E dx-\int_{\R^3}\pa^\al \na_x\times \pa_t B\cdot \pa^\al E dx+\int_{\R^3}\pa^\al \na_x\times B\cdot \pa^\al b dx\\
    =\frac{d}{dt}\int_{\R^3}\pa^\al \na_x\times B\cdot \pa^\al E dx+\int_{\R^3}\pa^\al \na_x\times \na_x\times E\cdot \pa^\al E dx+\int_{\R^3}\pa^\al \na_x\times B\cdot \pa^\al b dx.
\end{multline*}
Then, \eqref{thm.efli.p4} follows by applying integration by part to the right-hand second term of the above identity, using the Cauchy-Schwarz inequality and then taking summation over $1\leq |\al|\leq N-2$. The proof of Theorem \ref{thm.efli} is complete. \qed

\subsection{$L^2$ time-frequency functional and its optimal dissipation rate}
In this subsection we shall prove Theorem \ref{thm.tfli} as well as Corollary \ref{cor1.tfli} and Corollary \ref{cor2.tfli}. For that, we need to consider the solution $U=[u,E,B]$ to the Cauchy problem \eqref{li.eq}-\eqref{li.ID} in the Fourier space $\R^3_k$. By taking the Fourier transform in $x$ from \eqref{li.eq}$_1$, \eqref{ME.lieq.macro} and
\eqref{ME.lieq.maxwell}, one has
\begin{equation}\label{li.eq.F}
 \pa_t \hat{u}+i\xi\cdot k \hat{u}-\xi\FM^{1/2}\cdot \hat{E}
      =\FL \hat{u} +\hat{h},
\end{equation}
\begin{equation}\label{ME.lieq.macro.F}
\left\{\begin{split}
&\dis \pa_t \hat{a}+ik\cdot \hat{b}=0,\\
&\dis \pa_t \hat{b} +ik (\hat{a}+2\hat{c})+ik \cdot\Theta(\{\FI-\FP\}\hat{u})
    -\hat{E} =0,\\
&\dis \pa_t \hat{c}+\frac{1}{3}ik\cdot \hat{b}+\frac{5}{3}ik \cdot  \Lambda(\{\FI-\FP\}\hat{u})=0,\\
&\dis \pa_t \Theta_{ij}(\{\FI-\FP\}\hat{u})+ik_i \hat{b}_j+ik_j \hat{b}_i-\frac{2}{3}\de_{ij}ik\cdot \hat{b}-\frac{10}{3}\de_{ij}ik\cdot \Lambda(\{\FI-\FP\}\hat{u})=\Theta_{ij}(\hat{\ell}+\hat{h}),\\
&\pa_t \Lambda_i(\{\FI-\FP\}\hat{u})+ik_i  \hat{c}=\Lambda_i(\hat{\ell}+\hat{h}),
  \end{split}\right.
\end{equation}
and
\begin{equation}\label{li.Maxwell.F}
    \left\{\begin{split}
      &\dis\pa_t \hat{E}-ik\times \hat{B}=-\hat{b},\\
      &\dis\pa_t \hat{B} +ik \times \hat{E} =0,\\
     & \dis ik\cdot \hat{E}=\hat{a},\ \ ik\cdot \hat{B}=0,
    \end{split} \right.
\end{equation}
where $\hat{\ell}$ is given by
\begin{equation*}
     \hat{\ell} = -ik\cdot \xi\{\FI-\FP\}\hat{u}+\FL \hat{u}.
\end{equation*}
These equations above are ones to be used through this subsection.

\medskip

\noindent{\bf Proof of Theorem \ref{thm.tfli}:} It is similar to the proof of Theorem \ref{thm.efli}. The difference is that all calculations  are made in the Fourier space. Thus, some details in the following proof will be omitted for simplicity. First of all, as in \cite{D-Hypo}, on one hand, from \eqref{li.eq.F} and \eqref{li.Maxwell.F}, one has
\begin{equation}\label{thm.tfli.p1}
    \frac{1}{2}\pa_t(\|\hat{u}\|_{L^2_\xi}^2+|[\hat{E},\hat{B}]|^2)
    +\la \|\nu^{1/2}\{\FI-\FP\}\hat{u}\|_{L^2_\xi}^2\leq
    C\|\nu^{-1/2}\hat{g}\|_{L^2_\xi}^2,
\end{equation}
and on the other hand, from \eqref{ME.lieq.macro.F}, the following three estimates hold true:
\begin{multline}\label{thm.tfli.p2}
\pa_t\MRk(ik\hat{c}\mid \Lambda(\{\FI-\FP\}\hat{u})
+\la |k|^2|\hat{c}|^2\\
\leq \eps_1 \|k\cdot \hat{b}\|^2
+\frac{C}{\eps_1}(1+|k|^2)
\|\{\FI-\FP\}\hat{u}\|_{L^2_\xi}^2+C\|\nu^{-1/2}\hat{g}\|_{L^2_\xi}^2,
\end{multline}
\begin{multline}\label{thm.tfli.p3}
\pa_t\MRk\sum_{ij=1}^3(ik_i\hat{b}_j+ik_j\hat{b}_i-\frac{2}{3}\de_{ij}ik\cdot \hat{b}\mid
\Theta_{ij}(\{\FI-\FP\}\hat{u})
+\la |k|^2|\hat{b}|^2 \\
\leq \eps_2|k|^2|[\hat{a},\hat{c}]|^2+\eps_2\frac{|k|^2}{1+|k|^2}|\hat{E}|^2
+\frac{C}{\eps_2}(1+|k|^2)
\|\{\FI-\FP\}\hat{u}\|_{L^2_\xi}^2+C\|\nu^{-1/2}\hat{g}\|_{L^2_\xi}^2,
\end{multline}
and
\begin{equation}\label{thm.tfli.p4}
\pa_t\MRk(ik\hat{a}\mid \hat{b}) +\la (1+|k|^2)|\hat{a}|^2
\leq |k\cdot \hat{b}|^2+C|k|^2|\hat{c}|^2+C|k|^2\|\{\FI-\FP\}\hat{u}\|_{L^2_\xi}^2,
\end{equation}
where constants $0<\eps_1,\eps_2\leq 1$ are to be chosen. Define
\begin{multline}\label{thm.tfli.p5}
  \CE^{{\rm lin},1}(\hat{U}(t)) =\frac{1}{1+|k|^2}\MRk\{
(ik\hat{c}\mid \Lambda(\{\FI-\FP\}\hat{u})\\
  +\sum_{ij=1}^3(ik_i\hat{b}_j+ik_j\hat{b}_i-\frac{2}{3}\de_{ij}ik\cdot \hat{b}\mid
\Theta_{ij}(\{\FI-\FP\}\hat{u})
+\kappa_1(ik\hat{a}\mid \hat{b})\}
\end{multline}
for a constant $\kappa_1>0$. One  can take $\kappa_1$ and then $\eps_1$ both small enough such that the sum of \eqref{thm.tfli.p2}, \eqref{thm.tfli.p3} and $\kappa_1\times$\eqref{thm.tfli.p4} gives
\begin{multline}\label{thm.tfli.p6}
\pa_t \CE^{{\rm lin},1}(\hat{U}(t))+\la \frac{|k|^2}{1+|k|^2}|[\hat{a},\hat{b},\hat{c}]|^2+|\hat{a}|^2\\
\leq \eps_2\frac{|k|^2}{(1+|k|^2)^2}|\hat{E}|^2+\frac{C}{\eps_2} \left(
\|\{\FI-\FP\}\hat{u}\|_{L^2_\xi}^2+\|\nu^{-1/2}\hat{g}\|_{L^2_\xi}^2\right).
\end{multline}

For estimates on the dissipation of $\hat{E}$ and $\hat{B}$, it is straightforward to deduce from \eqref{li.eq.F}$_2$ and \eqref{ME.lieq.macro.F} the following two identities
\begin{multline}\label{thm.tfli.p7}
-\pa_t (k\times \hat{E}\mid k\times \hat{b})+|k\times \hat{E}|^2=|k\times \hat{b}|^2-(ik\times k\times \hat{B}\mid k\times \hat{b})\\
+(k\times \hat{E}\mid ik\times (k\cdot \Theta(\{\FI-\FP\}\hat{u}))),
\end{multline}
and
\begin{equation}\label{thm.tfli.p8}
-\pa_t (ik\times \hat{B}\mid \hat{E})+|k\times \hat{B}|^2=|k\times \hat{E}|^2+(ik\times \hat{B}\mid \hat{b}).
\end{equation}
By applying the Cauchy-Schwarz to \eqref{thm.tfli.p8} and then multiplying it by $|k|^2/(1+|k|^2)^3$, one has
\begin{equation*}
-\pa_t \frac{|k|^2}{(1+|k|^2)^3}\MRk (ik\times \hat{B}\mid \hat{E})+\la \frac{|k|^2|k\times \hat{B}|^2}{(1+|k|^2)^3}\leq \frac{|k|^2|k\times \hat{E}|^2}{(1+|k|^2)^3}+C\frac{|k|^2| \hat{b}|^2}{(1+|k|^2)^3},
\end{equation*}
which implies
\begin{equation}\label{thm.tfli.p9}
-\pa_t \frac{|k|^2}{(1+|k|^2)^3}\MRk (ik\times \hat{B}\mid \hat{E})+\la \frac{|k|^2|k\times \hat{B}|^2}{(1+|k|^2)^3}\leq \frac{|k\times \hat{E}|^2}{(1+|k|^2)^2}+C\frac{|k|^2| \hat{b}|^2}{1+|k|^2}.
\end{equation}
Similarly, after dividing \eqref{thm.tfli.p7} by $(1+|k|^2)^2$ and then using  Cauchy-Schwarz,
\begin{eqnarray}\label{thm.tfli.p10}
&&-\frac{\pa_t \MRk (k\times \hat{E}\mid k\times \hat{b})}{(1+|k|^2)^2}+\la \frac{|k\times \hat{E}|^2}{(1+|k|^2)^2}\\
&&\leq \frac{|k\times \hat{b}|^2}{(1+|k|^2)^2}
+\frac{|k\times k\times \hat{B}|\cdot |k\times \hat{b}|}{(1+|k|^2)^2}
+C\frac{|k|^4 |\Theta(\{\FI-\FP\}\hat{u})|^2}{(1+|k|^2)^2}\nonumber\\
&&\leq \eps_3 \frac{|k|^2|k\times \hat{B}|^2}{(1+|k|^2)^3}+\frac{C}{\eps_3}\frac{|k|^2}{1+|k|^2}|\hat{b}|^2
+C\|\{\FI-\FP\}\hat{u}\|_{L^2_\xi}^2,\nonumber
\end{eqnarray}
where we used the inequality
\begin{equation*}
    \frac{|k\times k\times \hat{B}|\cdot |k\times \hat{b}|}{(1+|k|^2)^2}
    \leq \eps_3  \frac{|k\times k\times \hat{B}|^2}{(1+|k|^2)^3}+\frac{C}{\eps_3}\frac{|k\times \hat{b}|^2}{1+|k|^2}
\end{equation*}
for an arbitrary constant $0<\eps_3\leq 1$. Then, in terms of \eqref{thm.tfli.p9} and \eqref{thm.tfli.p10}, let us define
\begin{equation}\label{thm.tfli.p11}
  \CE^{{\rm lin},2}(\hat{U}(t)) =-\frac{\MRk (k\times \hat{E}\mid k\times \hat{b})}{(1+|k|^2)^2}
  -\kappa_2\frac{|k|^2\MRk (ik\times \hat{B}\mid \hat{E})}{(1+|k|^2)^3},
\end{equation}
where $\kappa_2>0$ is chosen small enough such that
\begin{equation}\label{thm.tfli.p12}
    \pa_t \CE^{{\rm lin},2}(\hat{U}(t))+\la \frac{|k\times \hat{E}|^2}{(1+|k|^2)^2}
    +\la \frac{|k|^4|\hat{B}|^2}{(1+|k|^2)^3}\leq C\frac{|k|^2| \hat{b}|^2}{1+|k|^2}+C\|\{\FI-\FP\}\hat{u}\|_{L^2_\xi}^2.
\end{equation}
Here, we used $|k\times \hat{B}|=|k|\cdot |k\times \hat{B}|$ due to $k\cdot \hat{B}=0$.

Now, in terms of \eqref{thm.tfli.p1}, \eqref{thm.tfli.p6} with $0<\eps_2\leq 1$ and \eqref{thm.tfli.p12},  we define
\begin{equation}\label{def.lin.tk}
  \CE^{{\rm lin}}(\hat{U}(t))=\|\hat{u}\|_{L^2_\xi}^2+|[\hat{E},\hat{B}]|^2
   +\kappa_4\left( \CE^{{\rm lin},1}(\hat{U}(t)) +  \kappa_3\CE^{{\rm lin},2}(\hat{U}(t)) \right),
\end{equation}
where $\CE^{{\rm lin},1}(\hat{U}(t))$, $\CE^{{\rm lin},2}(\hat{U}(t))$ are denoted by \eqref{thm.tfli.p5} and \eqref{thm.tfli.p11}, and  $\kappa_3>0$, $\eps_2>0$ and $\kappa_4>0$ are chosen in turn small enough such that \eqref{thm.tfli.1} and \eqref{thm.tfli.3} hold true and  $ \CD^{{\rm lin}}(\hat{U}(t))$ is given by \eqref{thm.tfli.2}. Here, notice that we used
\begin{equation*}
|k|^2|\hat{E}|^2=|k\cdot \hat{E}|^2+|k\times \hat{E}|^2=|\hat{a}|^2+|k\times \hat{E}|^2.
\end{equation*}
The proof of Theorem  \ref{thm.tfli} is complete. \qed

\medskip

\noindent{\bf Proof of Corollary \ref{cor1.tfli} and Corollary \ref{cor2.tfli}:} First, \eqref{cor1.tfli.1} in Corollary \ref{cor1.tfli} immediately results from \eqref{thm.tfli.3} by noticing
\begin{equation*}
    \CD(\hat{U}(t,k))\geq \la \frac{|k|^4}{(1+|k|^2)^3}\CE(\hat{U}(t,k))
\end{equation*}
due to the definitions \eqref{thm.tfli.1}, \eqref{thm.tfli.2} of $\CE(\hat{U}(t,k))$, $\CD(\hat{U}(t,k))$. For Corollary \ref{cor2.tfli}, it suffices to define
\begin{equation}\label{def.lin.morder}
    \CE_m^{{\rm lin}}(\hat{U}(t,k))=\sum_{|\al|=m}\CE(\widehat{\na_x^kU}(t,k))=\sum_{|\al|=m}\CE((ik)^\al\hat{U}(t,k)).
\end{equation}
Since the system \eqref{li.eq} satisfied by $U=[u,E,B]$ is linear,
it is easy to see from Theorem \ref{thm.tfli} that $\CE_m^{{\rm
lin}}(\hat{U}(t,k))$ satisfies \eqref{cor2.tfli.1} and
\eqref{cor2.tfli.3} with $\CD_m^{{\rm lin}}(\hat{U}(t,k))$ given by
\eqref{cor2.tfli.2}. This hence completes the proof of Corollary
\ref{cor2.tfli}. \qed

\subsection{Time-decay estimates}
In this subsection we turn to the proof of Theorem \ref{thm.ls}.
Theorem \ref{thm.ls} actually follows from \eqref{cor1.tfli.1} in
Corollary \ref{cor1.tfli}. However, we would rather provide a
similar much more general result. Recall the definition of
$\semiG(t)$ as in \eqref{def.lu1}. In what follows, for the Fourier
transform $\hat{U}(t,k)$ of $U=[u,E,B]$, we set
\begin{equation}\label{def.abs}
    |\hat{U}(t,k)|=\|\hat{u}\|_{L^2_\xi}+|[\hat{E},\hat{B}]|
\end{equation}
for simplicity.

\begin{lemma}\label{lem.glidecay}
Assume that for any initial data $U_0$, the linear homogeneous
solution $U(t)=\semiG(t)U_0$ obeys the pointwise estimate
\begin{equation}\label{lem.glidecay.1}
    |\hat{U}(t,k)|\leq C e^{-\phi(k) t}|\hat{U}_0(k)|
\end{equation}
for all $t\geq 0$, $k\in \R^3$, where $\phi(k)$ is a strictly positive, continuous and real-valued function over $k\in \R^3$ and satisfies
\begin{equation}\label{lem.glidecay.2}
    \phi(k)\to \left\{
    \begin{array}{ll}
      O(1)|k|^{\si_+} & \ \ \text{as $|k|\to 0$},\\[3mm]
      O(1)|k|^{-\si_-} & \ \ \text{as $|k|\to \infty$},
    \end{array}\right.
\end{equation}
for two constants $\si_->\si_+>0$. Let $m\geq 0$ be an integer, $1\leq p,r\leq 2\leq q\leq \infty$ and $\si\geq 0$. Then, $U(t)=\semiG(t)U_0$ obeys the time-decay estimate
\begin{equation}\label{lem.glidecay.3}
 \|\na_x^m U(t)\|_{\CZ_q}
\leq
C(1+t)^{-\frac{3}{\si_+}(\frac{1}{p}-\frac{1}{q})-\frac{m}{\si_+}}
\|U_0\|_{\CZ_p}
 +C(1+t)^{-\frac{\si}{\si_-}} \|\na_x^{m+[\si+3(\frac{1}{r}-\frac{1}{q})]_+}U_0\|_{\CZ_r},
\end{equation}
for any $t\geq 0$,  {where $[\cdot]_+$ is defined in \eqref{def.plus}.}
\end{lemma}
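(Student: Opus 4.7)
The plan is to perform a frequency decomposition $U(t)=U^L(t)+U^H(t)$ with $\widehat{U^L}=\mathbf{1}_{|k|\leq 1}\hat U$, exploiting the two distinct asymptotics of $\phi(k)$ in \eqref{lem.glidecay.2}: on the low-frequency piece, $\phi(k)\geq c|k|^{\sigma_+}$ will produce the heat-kernel-like decay $(1+t)^{-\frac{3}{\sigma_+}(\frac{1}{p}-\frac{1}{q})-\frac{m}{\sigma_+}}$, while on the high-frequency piece the degenerate behaviour $\phi(k)\geq c|k|^{-\sigma_-}$ will only deliver algebraic decay at the cost of extra spatial derivatives on $U_0$. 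Both regimes are treated by passing to the Fourier side and using the chain: Hausdorff--Young in $x$ (since $q\geq 2$, $q'\leq 2$) together with Minkowski's integral inequality to exchange the $L^2_\xi$ and $L^{q'}_k$ norms (legitimate because $q'\leq 2$), and symmetrically on the side of $U_0$ with exponent $p$ (resp.\ $r$).

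For the low-frequency term, inserting the pointwise bound \eqref{lem.glidecay.1} gives $|\widehat{\na_x^m U^L}(t,k)|\leq C|k|^m e^{-c|k|^{\sigma_+}t}|\hat U_0(k)|\mathbf{1}_{|k|\leq 1}$, and a Hölder in $k$ with $\frac{1}{s}=\frac{1}{p}-\frac{1}{q}$ reduces the estimate to the multiplier bound $\||k|^m e^{-c|k|^{\sigma_+}t}\|_{L^s(|k|\leq 1)}\leq C(1+t)^{-\frac{3}{\sigma_+}(\frac{1}{p}-\frac{1}{q})-\frac{m}{\sigma_+}}$, which follows by the scaling $u=|k|^{\sigma_+}t$ and standard Gamma-type integral estimates (with the $t=O(1)$ regime being trivial). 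For the high-frequency term, one employs the elementary trade $e^{-ct/|k|^{\sigma_-}}\leq C_s(1+t)^{-s/\sigma_-}|k|^s$ valid for any $s\geq 0$ when $|k|\geq 1$, choosing $s=\sigma$ so that $|\widehat{\na_x^m U^H}(t,k)|\leq C(1+t)^{-\sigma/\sigma_-}|k|^{m+\sigma}|\hat U_0(k)|\mathbf{1}_{|k|\geq 1}$. A Hölder in $k$ with $\frac{1}{q'}=\frac{1}{\tau}+\frac{1}{r'}$, equivalently $\frac{1}{\tau}=\frac{1}{r}-\frac{1}{q}\geq 0$, extracts a weight $|k|^{-a}$ that is integrable over $\{|k|\geq 1\}$ precisely when $a>3(\frac{1}{r}-\frac{1}{q})$; the residual $|k|^{m+\sigma+a}$ is then dominated on $|k|\geq 1$ by $|k|^n$ for the smallest admissible integer $n$. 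Choosing $a$ optimally yields exactly $n=m+[\sigma+3(\frac{1}{r}-\frac{1}{q})]_+$ as in \eqref{def.plus}, and a final Hausdorff--Young/Minkowski/Riesz-transform step recognises $\||k|^n \hat U_0\|_{L^{r'}_k L^2_\xi}$ as being controlled by $\|\na_x^n U_0\|_{\CZ_r}$.

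The exceptional case $\sigma\in\N$ and $r=q=2$ in \eqref{def.plus} is recovered because there $\tau=\infty$ makes $a=0$ admissible, and Plancherel replaces Hausdorff--Young exactly, so no extra derivative is lost. The main technical obstacle is the bookkeeping in the high-frequency analysis: one must organise the chain of Minkowski, Hausdorff--Young and Hölder inequalities so that the forced \emph{strict} condition $a>3/\tau$ rounds up to the integer in \eqref{def.plus}, and so that the power $|k|^{m+\sigma+a}$ can genuinely be identified with $\widehat{\na_x^n U_0}$ up to bounded Riesz-type multipliers on $L^r$, which is where the compactly supported Fourier cut-off $\mathbf{1}_{|k|\geq 1}$ plays an essential role.
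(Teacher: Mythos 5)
Your proposal is correct and follows essentially the same route as the paper's proof: a frequency split at a fixed radius, Hausdorff--Young plus Minkowski to exchange the $L^2_\xi$ and $L^{q'}_k$ norms, the scaling $k t^{1/\sigma_+}\to k$ for the low-frequency multiplier, the trade $e^{-ct|k|^{-\sigma_-}}\leq C(1+t)^{-\sigma/\sigma_-}|k|^{\sigma}$ at high frequencies, and a Hölder weight $|k|^{-a}$ with the strict integrability condition $a>3(\tfrac1r-\tfrac1q)$ that forces the integer round-up in \eqref{def.plus}, with Plancherel handling the case $r=q=2$. The only cosmetic quibble is that the cut-off $\mathbf{1}_{|k|\geq 1}$ is not compactly supported and no Riesz-multiplier argument is needed, since $|k|^{n}\leq C\sum_{|\alpha|=n}|k^{\alpha}|$ pointwise already identifies the weight with $\widehat{\nabla_x^{n}U_0}$.
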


\begin{proof}
Take a constant $R>0$. From the assumptions on $\phi(k)$, it is easy to see
\begin{equation*}
    \phi(k)\geq \left\{
    \begin{array}{ll}
      \la |k|^{\si_+} & \ \ \text{if $|k|\leq R$},\\[3mm]
      \la |k|^{-\si_-} & \ \ \text{if $|k|\geq R$}.
    \end{array}\right.
\end{equation*}
Take $2\leq q\leq \infty$ and an integer $m\geq 0$. From Hausdorff-Young inequality,
\begin{multline}\label{lem.glidecay.p1}
\|\na_x^m U(t)\|_{\CZ_q}\leq C\left\||k|^m e^{-\phi(k)t}\hat{U}_0\right\|_{\CZ_{q'}}\\
\leq C\left\||k|^m e^{-\la |k|^{\si_+}t}\hat{U}_0\right\|_{\CZ_{q'}(|k|\leq R)}
+C \left\||k|^m e^{-\la |k|^{-\si_-}t}\hat{U}_0\right\|_{\CZ_{q'}(|k|\geq R)}:=I_1+I_2,
\end{multline}
where $\frac{1}{q}+\frac{1}{q'}=1$. For $I_1$, by the definition of
the norm $\|\cdot\|_{\CZ_{q'}}$,
\begin{equation*}
    I_1=C\left\||k|^m e^{-\la |k|^{\si_+}t}\hat{u}_0\right\|_{L^2_\xi(L^{q'}(|k|\leq R))}
    +C\left\||k|^m e^{-\la |k|^{\si_+}t}[\hat{E}_0,\hat{B}_0]\right\|_{L^{q'}(|k|\leq
    R)}.
\end{equation*}
Here, note that since $1\leq q'\leq 2$, from the Minkowski
inequality,
\begin{multline*}
\left\||k|^m e^{-\la
|k|^{\si_+}t}\hat{u}_0\right\|_{L^2_\xi(L^{q'}(|k|\leq R))} \leq
\left\||k|^m e^{-\la |k|^{\si_+}t}\hat{u}_0\right\|_{L^{q'}(|k|\leq
R;L^2_\xi)}\\
=\left\||k|^m e^{-\la
|k|^{\si_+}t}\|\hat{u}_0\|_{L^2_\xi}\right\|_{L^{q'}(|k|\leq R)}.
\end{multline*}
Hence, we arrive at
\begin{equation*}
    I_1\leq C\left\||k|^m e^{-\la |k|^{\si_+}t}[\|\hat{u}_0\|_{L^2_\xi},
    \hat{E}_0,\hat{B}_0]\right\|_{L^{q'}(|k|\leq
    R)}.
\end{equation*}
Take $1\leq p\leq 2$. Further using the H\"{o}lder inequality for
$\frac{1}{q'}=\frac{p'-q'}{p'q'}+\frac{1}{p'}$ with $p'$ given by
$\frac{1}{p}+\frac{1}{p'}=1$,
\begin{equation*}
    I_1\leq C\left\||k|^m e^{-\la |k|^{\si_+}t}\right\|_{L^{\frac{p'q'}{p'-q'}}(|k|\leq R)}
    \left\|[\|\hat{u}_0\|_{L^2_\xi},
    \hat{E}_0,\hat{B}_0]\right\|_{L^{p'}(|k|\leq
    R)}
\end{equation*}
Here, the right-hand first term can be estimated in a standard way \cite{Ka}
as
\begin{equation*}
\left\||k|^m e^{-\la
|k|^{\si_+}t}\right\|_{L^{\frac{p'q'}{p'-q'}}(|k|\leq R)}\leq C
(1+t)^{-\frac{3}{\si_+}(\frac{1}{p}-\frac{1}{q})-\frac{m}{\si_+}}
\end{equation*}
by using change of variable $kt^{\frac{1}{\si_+}}\to k$, and the
right-hand second term is estimated by Minkowski and Hausdorff-Young
inequalities as
\begin{multline*}
\left\|[\|\hat{u}_0\|_{L^2_\xi},
    \hat{E}_0,\hat{B}_0]\right\|_{L^{p'}(|k|\leq
    R)}=\|\hat{u}_0\|_{L^{p'}(|k|\leq R;L^2_\xi)}+\left\|[
    \hat{E}_0,\hat{B}_0]\right\|_{L^{p'}(|k|\leq
    R)}\\
    \leq\|\hat{u}_0\|_{L^2_\xi(L^{p'}(|k|\leq R))}+\left\|[
    \hat{E}_0,\hat{B}_0]\right\|_{L^{p'}(|k|\leq
    R)}\\
    \leq
    C(\|u_0\|_{L^2_\xi(L^p_x)}+\|[E_0,B_0]\|_{L^p})=C\|U_0\|_{\CZ_p},
\end{multline*}
where the Minkowski inequality was validly used due to $p'\geq 2$.
Therefore, for $I_1$, one has
\begin{equation*}
I_1\leq
C(1+t)^{-\frac{3}{\si_+}(\frac{1}{p}-\frac{1}{q})-\frac{m}{\si_+}}\|U_0\|_{\CZ_p}.
\end{equation*}
To estimate $I_2$, take a constant $\si\geq 0$ so that
\begin{equation*}
    I_2=C \left\||k|^m e^{-\la |k|^{-\si_-}t}\hat{U}_0\right\|_{\CZ_{q'}(|k|\geq R)}
    \leq C\sup_{|k|\geq R}|k|^{-\si} e^{-\la |k|^{-\si_-}t}
    \left\||k|^{m+\si} \hat{U}_0\right\|_{\CZ_{q'}(|k|\geq R)}.
\end{equation*}
Here, the right-hand first term decays in time as
\begin{equation*}
\sup_{|k|\geq R}|k|^{-\si} e^{-\la |k|^{-\si_-}t}\leq
C(1+t)^{-\frac{\si}{\si_-}}.
\end{equation*}
We estimate the right-hand second term as follows. Take $1\leq
r\leq 2$ with $\frac{1}{r}+\frac{1}{r'}=1$ and take a constant
$\eps>0$ small enough. Then, similarly as before, from Minkowski and
H\"{o}lder inequalities for
$\frac{1}{q'}=\frac{r'-q'}{r'q'}+\frac{1}{r'}$, one has
\begin{multline*}
 \left\||k|^{m+\si} \hat{U}_0\right\|_{\CZ_{q'}(|k|\geq R)}\leq
  \left\||k|^{m+\si}[\|\hat{u}_0\|_{L^2_\xi},\hat{E}_0,\hat{B}_0]\right\|_{L^{q'}(|k|\geq
  R)}\\
  \leq \left\||k|^{-3(1+\eps)\frac{r'-q'}{r'q'}}\right\|_{L^{\frac{r'q'}{r'-q'}}(|k|\geq
  R)}\left\||k|^{m+\si+3(1+\eps)\frac{r'-q'}{r'q'}}
  [\|\hat{u}_0\|_{L^2_\xi},\hat{E}_0,\hat{B}_0]\right\|_{{L^{r'}(|k|\geq
  R)}}\\
  \leq C_\eps\left\||k|^{m+[\si+3(\frac{1}{r}-\frac{1}{q})]_+}
  [\|\hat{u}_0\|_{L^2_\xi},\hat{E}_0,\hat{B}_0]\right\|_{{L^{r'}(|k|\geq
  R)}}.
\end{multline*}
Here, by  Minkowski inequality due to $q'\geq 2$ once again and
further by Hausdorff-Young inequality,
\begin{multline*}
\left\||k|^{m+[\si+3(\frac{1}{r}-\frac{1}{q})]_+}
  [\|\hat{u}_0\|_{L^2_\xi},\hat{E}_0,\hat{B}_0]\right\|_{{L^{r'}(|k|\geq
  R)}}\\
\leq \left\||k|^{m+[\si+3(\frac{1}{r}-\frac{1}{q})]_+}
\hat{u}_0\right\|_{L^2_\xi(L^{r'}(|k|\geq
R))}+\left\||k|^{m+[\si+3(\frac{1}{r}-\frac{1}{q})]_+}
  [\hat{E}_0,\hat{B}_0]\right\|_{{L^{r'}(|k|\geq
  R)}}\\
  \leq
  C\|\na_x^{m+[\si+3(\frac{1}{r}-\frac{1}{q})]_+}U_0\|_{\CZ_r}.
\end{multline*}
Then, it follows that
\begin{equation*}
\left\||k|^{m+\si} \hat{U}_0\right\|_{\CZ_{q'}(|k|\geq R)}\leq
C\|\na_x^{m+[\si+3(\frac{1}{r}-\frac{1}{q})]_+}U_0\|_{\CZ_r}.
\end{equation*}
Thus, $I_2$ is estimated by
\begin{equation*}
    I_2\leq
    C(1+t)^{-\frac{\si}{\si_-}}\|\na_x^{m+[\si+3(\frac{1}{r}-\frac{1}{q})]_+}U_0\|_{\CZ_r}.
\end{equation*}
Now, \eqref{lem.glidecay.3} follows by plugging the estimates of
$I_1$ and $I_2$ into \eqref{lem.glidecay.p1}. This completes the
proof of Lemma \ref{lem.glidecay}.
\end{proof}

\noindent{\bf Proof of Theorem \ref{thm.ls}:} To prove
\eqref{thm.ls.1}, by  letting $h=0$ and using Corollary
\ref{cor1.tfli},
\begin{equation*}
    \CE(\hat{U}(t,k))\leq e^{-\frac{\la
    |k|^4}{(1+|k|^2)^3}t}\CE(\hat{U}_0(k))
\end{equation*}
for any $t\geq 0$ and $k\in \R^3$, where we have set $U=U^{I}$ for
simplicity. Due to \eqref{thm.tfli.1} and \eqref{def.abs},
$\CE(\hat{U}(t,k))\sim |\hat{U}(t,k)|^2$ holds so that
\begin{equation*}
    |\hat{U}(t,k)|\leq C e^{-\frac{\la
    |k|^4}{2(1+|k|^2)^3}t}|\hat{U}_0(k)|
\end{equation*}
for any $t\geq 0$ and $k\in \R^3$. This shows that corresponding to
\eqref{lem.glidecay.1} and \eqref{lem.glidecay.2} of Lemma
\ref{lem.glidecay}, one has the special situation
\begin{equation*}
    \phi(k)=\frac{\la
    |k|^4}{2(1+|k|^2)^3}
\end{equation*}
with $\si_+=4$, $\si_-=2$. Thus, one can apply Lemma
\ref{lem.glidecay} to obtain \eqref{thm.ls.1} from
\eqref{lem.glidecay.3}.

To prove \eqref{thm.ls.2}, we let $U_0=0$, and also set $U=U^{II}$
for simplicity. Note that \eqref{cor1.tfli.1} implies
\begin{equation*}
    |\hat{U}(t,k)|^2\leq C\int_0^t e^{-\frac{\la
    |k|^4}{2(1+|k|^2)^3}(t-s)}\|\nu^{-1/2}\hat{h}(s)\|_{L^2_\xi}^2ds
\end{equation*}
for any $t\geq 0$ and $k\in \R^3$. One can again apply Lemma
\ref{lem.glidecay} with $q=r=2$ so that \eqref{thm.ls.2} follows.
The proof of Theorem  \ref{thm.ls} is complete. \qed

\section{Nonlinear system}\label{sec.nonlinear}

In this section we are concerned with the global existence of
solutions to the Cauchy problem \eqref{VMB.eq}-\eqref{VMB.ID} of the
reformulated nonlinear Vlasov-Maxwell-Boltzmann system. We first devote ourselves to the
proof of some uniform-in-time a priori estimates on the solution. In
what follows, $U=[u,E,B]$ is supposed to be smooth in all arguments
and satisfy the  system   \eqref{VMB.eq} over $0\leq t\leq T$ for
some $0<T\leq \infty$.

\begin{lemma}\label{lem.non}
Under the assumption that
$
\sup_{0\leq t\leq T}\|(a+2c)(t)\|_{L^\infty}
$
is small enough, there is $\CE_N(U(t))$ satisfying \eqref{thm.non.1}
such that
\begin{equation}\label{lem.non.1}
    \frac{d}{dt}\CE_N(U(t))+\la \CD_N(U(t))\leq C (\CE_N(U(t))^{1/2}+\CE_N(U(t)))\CD_N(U(t))
\end{equation}
for any $0\leq t\leq T$, where $\CD_N(U(t))$ is defined in
\eqref{thm.non.2}.
\end{lemma}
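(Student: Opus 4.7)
The approach is to extend the three-step argument of Theorem \ref{thm.efli} to the nonlinear system, now including pure velocity derivatives (since $\CD_N$ controls $\|\nu^{1/2}\{\FI-\FP\}u\|_{H^N_{x,\xi}}$) and nonlinear forcing. First, I would perform the standard $L^2$ energy estimate: apply $\pa^\al$ with $|\al|\leq N$ to \eqref{VMB.eq}$_1$, take inner product with $\pa^\al u$ in $L^2_{x,\xi}$, and couple with the $H^N_x$-estimate of $[E,B]$ from the Maxwell system. The free-transport and electromagnetic-coupling terms cancel as in the linear case, yielding $\la\sum_{|\al|\leq N}\|\nu^{1/2}\pa^\al\{\FI-\FP\}u\|^2$ on the left. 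The nonlinear terms $\Ga(u,u)$, $(E+\xi\times B)\cdot\na_\xi u$ and $\frac{1}{2}\xi\cdot E u$ contribute error terms which, modulo the key obstruction discussed below, are bounded by $C(\CE_N^{1/2}+\CE_N)\CD_N$ through Sobolev embedding $H^2\hookrightarrow L^\infty$ (requiring $N\geq 4$) and standard $\nu$-weighted estimates on the Boltzmann collision operator.

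Next, to capture the pure-$\xi$ part of the microscopic dissipation I would apply $\pa^\al_\be$ with $|\al|+|\be|\leq N$, $|\be|\geq 1$ to the microscopic projection of \eqref{VMB.eq}$_1$ and close inductively on $|\be|$, descending in $|\al|$. The commutator $[\pa_\be,\xi\cdot\na_x]$ transfers one velocity derivative to a spatial one; this is precisely the mechanism that forces $\CD_N$ to contain the stronger weight $\|\nu^{1/2}\na_x u\|_{L^2_\xi(H^{N-1}_x)}$ on the whole $u$ rather than only its microscopic component. The macroscopic and electromagnetic dissipation estimates then proceed as in the proof of \eqref{thm.efli.p2} and \eqref{thm.efli.p6}, but now starting from the full nonlinear moment system \eqref{ME.eq.macro} rather than its linearization \eqref{ME.lieq.macro}: the extra nonlinear forcings $Ea$, $b\times B$, $\frac{1}{3}E\cdot b$ and $\Theta_{ij}(\ell+g)$, $\Lambda_i(\ell+g)$ again produce errors bounded by $C\CE_N^{1/2}\CD_N$. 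This yields the interactive functionals $\CE_N^{{\rm lin},1}$ and $\CE_N^{{\rm lin},2}$ from \eqref{thm.efli.p1-def} and \eqref{thm.efli.p5}, now with $u$ the nonlinear solution, and gives the dissipation of $\na_x[a,b,c]$, of $\pa^\al E$ for $1\leq|\al|\leq N-1$, and of $\pa^\al B$ for $2\leq|\al|\leq N-1$.

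The main obstacle, as flagged in the introduction, is the quadratic term $\frac{1}{2}\xi\cdot E u$ in the basic energy estimate. Its contribution decomposes into a microscopic piece, easily bounded by $C\CE_N^{1/2}\CD_N$, and a macroscopic piece $\iint \xi\cdot E(\FP u)^2\,dxd\xi$ for which neither $E$ nor $\FP u$ enters the dissipation rate, so a direct Cauchy--Schwarz bound fails. The remedy is to carry out the $\xi$-integration explicitly using $\FP u=\{a+b\cdot\xi+c(|\xi|^2-3)\}\FM^{1/2}$: the surviving leading contribution is proportional to $\int E\cdot b\,(a+2c)\,dx$, and combining \eqref{ME.bl}$_1$--\eqref{ME.bl}$_3$ with the Maxwell equation \eqref{VMB.eq}$_2$ allows one to write
\begin{equation*}
\iint \xi\cdot E(\FP u)^2\,dxd\xi=\frac{d}{dt}\int |b|^2(a+2c)\,dx+R,
\end{equation*}
with a remainder $R$ bounded by $C(\CE_N^{1/2}+\CE_N)\CD_N$, and analogously for higher spatial derivatives. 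This correction term is incorporated into $\CE_N(U(t))$; smallness of $\|(a+2c)\|_{L^\infty}$ ensures that the corrected functional remains equivalent to $\|u\|_{H^N_{x,\xi}}^2+\|[E,B]\|_{H^N_x}^2$, which is exactly why that hypothesis appears in the lemma.

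Finally, I would combine everything by setting
\begin{equation*}
\CE_N(U(t))=\sum_{|\al|+|\be|\leq N}\!\!\|\pa^\al_\be u\|^2+\sum_{|\al|\leq N}\|\pa^\al[E,B]\|^2+\kappa_4\bigl(\CE_N^{{\rm lin},1}+\kappa_3\CE_N^{{\rm lin},2}\bigr)+\text{correction},
\end{equation*}
choosing the small constants $\kappa_1,\kappa_2,\kappa_3,\eps_i,\kappa_4$ in the same order as in Theorem \ref{thm.efli} so that the macroscopic and electromagnetic dissipations absorb all cross terms, while the nonlinear errors are collected on the right as $C(\CE_N^{1/2}+\CE_N)\CD_N$. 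The hardest step is unquestionably the handling of $\xi\cdot E u$ via balance laws; everything else amounts to lifting the linear construction across Sobolev-estimated nonlinearities.
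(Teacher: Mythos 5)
Your plan follows the paper's own proof essentially step for step: the same basic and mixed-derivative energy estimates, the same interactive functionals $\CE_N^{{\rm lin},1}$, $\CE_N^{{\rm lin},2}$ lifted from the linear theorem with nonlinear forcings absorbed into $C(\CE_N^{1/2}+\CE_N)\CD_N$, and crucially the same treatment of $\iint\xi\cdot E|\FP u|^2\,dxd\xi$ via the balance laws, producing the correction $-\int|b|^2(a+2c)\,dx$ in $\CE_N$ whose harmlessness is guaranteed by the smallness of $\|(a+2c)\|_{L^\infty}$. The only cosmetic difference is that the paper needs this correction only at zero order (higher-order contributions of $\xi\cdot Eu$ are controlled directly since $\|\nu^{1/2}\na_x u\|_{L^2_\xi(H^{N-1}_x)}$ sits in $\CD_N$), so your "analogously for higher spatial derivatives" is unnecessary but not wrong.
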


\begin{proof}
First, the zero-order energy estimate implies
\begin{multline}\label{lem.non.p01}
\frac{1}{2}\frac{d}{dt}(\|u\|^2+\|[E,B]\|^2-\int
|b|^2(a+2c)dx+\la\|\nu^{1/2}\{\FI-\FP\} u\|^2\\
 \leq
C(\CE_N(U(t))^{1/2}+\CE_N(U(t)))\CD_N(U(t)).
\end{multline}
Here and hereafter, when $\CE_N(U(t))$ occurs in the right-hand
terms of inequalities, it means an equivalent energy functional
satisfying \eqref{thm.non.1} and its explicit representation will be
determined later on. In fact, from the system \eqref{VMB.eq},
\begin{equation}\label{lem.non.p02}
\frac{1}{2}\frac{d}{dt}(\|u\|^2+\|[E,B]\|^2)+\la
\|\nu^{1/2}\{\FI-\FP\} u\|^2\leq \iint
u\Ga(u,u)dxd\xi+\frac{1}{2}\iint \xi\cdot E u^2 dxd\xi.
\end{equation}
Here, for the first term on the r.h.s.~of \eqref{lem.non.p02}, it is
a standard fact as in \cite{Guo-IUMJ} or \cite{Guo2} that it is bounded by
$C\CE_N(U(t))^{1/2}\CD_N(U(t))$. Since all estimates on terms
involving the nonlinear term $\Ga(u,u)$ in the following can be
handled in the similar way, we shall omit the details of their proof
for brevity. The right-hand second term of \eqref{lem.non.p02} can
be estimated as in \cite[Lemma 4.4]{DY-09VPB}:
\begin{multline*}
\frac{1}{2}\iint \xi\cdot E u^2 dxd\xi=\frac{1}{2}\iint \xi\cdot E
|\FP u|^2 dxd\xi+\iint \xi\cdot E \FP u\{\FI-\FP\}u
dxd\xi\\
+\frac{1}{2}\iint \xi\cdot E |\{\FI-\FP\}u|^2 dxd\xi.
\end{multline*}
Here, it is easy to see
\begin{multline*}
\iint \xi\cdot E \FP u\{\FI-\FP\}u dxd\xi +\frac{1}{2}\iint \xi\cdot
E |\{\FI-\FP\}u|^2 dxd\xi\\
\leq
C\|E\|_{H^2}(\|\na_x[a,b,c]\|^2+\|\nu^{1/2}\{\FI-\FP\}u\|^2)\leq
C\CE_N(U(t))^{1/2}\CD_N(U(t)).
\end{multline*}
And, as in \cite[Lemma 4.4]{DY-09VPB}, one can compute
\begin{equation*}
\frac{1}{2}\iint \xi\cdot E |\FP u|^2 dxd\xi=\int E\cdot b (a+2c)dx,
\end{equation*}
where by replacing $E$ by equation \eqref{ME.eq.macro}$_2$, using
integration by part in $t$ and then replacing $\pa_t (a+2c)$ by
equations \eqref{ME.eq.macro}$_1$ and \eqref{ME.eq.macro}$_3$, gives
\begin{equation*}
\begin{split}
\int E\cdot b (a+2c)dx= &\,\frac{1}{2}\frac{d}{dt}\int
|b|^2(a+2c)dx\\
&+\int |b|^2[\frac{5}{6}\na_x\cdot b+\frac{1}{6}\na_x\cdot
\Lambda(\{\FI-\FP\}u)-\frac{1}{3}E\cdot
b]dx\\
&+\int [\na_x(a+2c)+\na_x\Theta(\{\FI-\FP\}u)]\cdot b (a+2c)dx\\
& -\int Ea\cdot b (a+2c)dx-\int b\times B\cdot b (a+2c)dx.
\end{split}
\end{equation*}
Note $b\times B\cdot b=0$. Hence, it follows that
\begin{multline*}
\frac{1}{2}\iint \xi\cdot E |\FP u|^2
dxd\xi\leq\frac{1}{2}\frac{d}{dt}\int |b|^2(a+2c)dx\\
+C\|[a,b,c,E]\|_{H^1}(\|\na_x [a,b,c]\|^2+\|\na_x\{\FI-\FP\}u\|^2)
+C\|E\|\, \|\na_x a\|\,\|\na_x [a,b,c]\|^2\\
\leq\frac{1}{2}\frac{d}{dt}\int |b|^2(a+2c)dx+
C(\CE_N(U(t))^{1/2}+\CE_N(U(t)))\CD_N(U(t)).
\end{multline*}
Collecting the above estimates and putting them into
\eqref{lem.non.p02} proves \eqref{lem.non.p01}.

Next, for the estimates on all derivatives including the pure
spatial derivatives and space-velocity mixed derivatives, one has
\begin{multline}\label{lem.non.p03}
\frac{1}{2}\frac{d}{dt}\sum_{1\leq |\al|\leq N}(\|\pa^\al
u\|^2+\|\pa^\al [E,B]\|^2)+\la \sum_{1\leq |\al|\leq
N}\|\nu^{1/2}\pa^\al \{\FI-\FP\}u\|^2\\
\leq C\CE_N(U(t))^{1/2}\CD_N(U(t)),
\end{multline}
and
\begin{multline}\label{lem.non.p04}
\frac{1}{2}\frac{d}{dt}\sum_{k=1}^NC_k\sum_{\substack{|\be|=k \\
|\al|+|\be|\leq N}}\|\pa^\al_\be \{\FI-\FP\}u\|^2+\la\sum_{\substack{|\be|\geq 1\\
|\al|+|\be|\leq N}} \|\nu^{1/2}\pa^\al_\be \{\FI-\FP\}u\|^2\\
\leq C\CE_N(U(t))^{1/2}\CD_N(U(t))+C\sum_{|\al|\leq N-1}\|\pa^\al
\na_x [a,b,c]\|^2 +C\sum_{|\al|\leq N}\|\nu^{1/2}\pa^\al
\{\FI-\FP\}u\|^2,
\end{multline}
where $C_k$ $(1\leq k\leq N)$ are strictly positive constants. Since
the nonlinear term $g$ takes the form as in \eqref{def.ell.g}, the
proof of \eqref{lem.non.p03} and \eqref{lem.non.p04} is almost the
same as in \cite{DY-09VPB} for the case of the
Vlasov-Poisson-Boltzmann system and thus details are omitted for
brevity.

Finally, the key step is to obtain the macroscopic dissipation of
$a,b,c$ and $E,B$ for the nonlinear system \eqref{VMB.eq}. This is
similar to the proof of Theorem \ref{thm.efli} in Subsection \ref{sec.ef}
for the linearized system. Here, the additional efforts should be
made to take care of all quadratically nonlinear terms in $g$
defined by \eqref{def.ell.g}. But, these nonlinear estimates once
again are almost the same as in \cite{DY-09VPB} for the case of the
Vlasov-Poisson-Boltzmann system so we omit details for brevity.
Thus, we have the following estimates. Recall the definitions
\eqref{thm.efli.p1-def} and \eqref{thm.efli.p5} of two interactive
functionals $\CE_N^{{\rm lin},1}(U(t))$ and  $\CE_N^{{\rm
lin},2}(U(t))$, where $\kappa_1>0$, $\kappa_2>0$ in
\eqref{thm.efli.p1-def} and \eqref{thm.efli.p5} are sufficiently
small. It turns out that
\begin{multline}\label{lem.non.p05}
\frac{d}{dt}\left(\CE_N^{{\rm lin},1}(U(t)) +\kappa_3\CE_N^{{\rm
lin},2}(U(t))\right)+\la\sum_{|\al|\leq N} \|\pa^\al a\|^2+\la
\sum_{1\leq |\al|\leq N}\|\pa^\al [b,c]\|^2\\
+\la \sum_{1\leq |\al|\leq N-1}\|\pa^\al E\|^2 +\la \sum_{2\leq
|\al|\leq N-1}\|\pa^\al B\|^2\\
\leq C\sum_{|\al|\leq N} \|\pa^\al
\{\FI-\FP\}u \|^2+C\CE_N(U(t))\CD_N(U(t)),
\end{multline}
where $\kappa_3>0$ is small enough. This is the desired estimate on
the macroscopic dissipation.

Now, we are in a position to prove \eqref{lem.non.1}. Let
$\kappa_4>0$ be taken as in \eqref{thm.efli.p7}.  Define
\begin{eqnarray}\label{lem.non.p06}
  \CE_N(U(t)) &=& \CE_N^{{\rm lin}}(U(t))-\int
|b|^2(a+2c)dx+\kappa_5\sum_{k=1}^NC_k\sum_{\substack{|\be|=k \\
|\al|+|\be|\leq N}}\|\pa^\al_\be \{\FI-\FP\}u\|^2,
\end{eqnarray}
where $\kappa_5>0$ is a constant to be chosen. Here, notice that by
recalling the definition \eqref{thm.efli.p7} of $\CE_N^{{\rm
lin}}(U(t))$, $\CE_N(U(t))$ can be rewritten as
\begin{eqnarray}\label{lem.non.p07}
 &&\CE_N(U(t)) \\
&& =\sum_{ |\al|\leq N}(\|\pa^\al
u\|^2+\|\pa^\al [E,B]\|^2)-\int |b|^2(a+2c)dx\nonumber\\
&&\ \ \ \ +\kappa_4 \left\{
\sum_{|\al|\leq N-1}\int_{\R^3}\na_x\pa^\al c\cdot \Lambda(\pa^\al\{\FI-\FP\}u)dx\right. \nonumber\\
 & &\ \ \ \ \ \ \ \ \ \ \ \ \  +\sum_{\substack{1\leq i,j\leq 3 \nonumber\\
|\al|\leq N-1}}\int_{\R^3} (\pa_i\pa^\al b_j+\pa_j\pa^\al
b_i-\frac{2}{3}\de_{ij}\na_x\cdot \pa^\al b)
\Theta_{ij}(\pa^\al\{\FI-\FP\}u)dx\nonumber\\
&&\ \ \ \ \ \ \ \ \ \ \ \ \   \left.+\kappa_1\sum_{|\al|\leq
N-1}\int_{\R^3}\na_x\pa^\al a\cdot \pa^\al b dx \right\}\nonumber\\
 &&\ \ \ \ +\kappa_4\kappa_3\left\{-\sum_{1\leq |\al|\leq
N-1}\int_{\R^3}\pa^\al E\cdot \pa^\al b dx-\kappa_2\sum_{1\leq
|\al|\leq N-2}\int_{\R^3}\pa^\al \na_x\times B\cdot \pa^\al E
dx\right\}\nonumber\\
&&\ \ \ \  +\kappa_5\sum_{k=1}^NC_k\sum_{\substack{|\be|=k \nonumber\\
|\al|+|\be|\leq N}}\|\pa^\al_\be \{\FI-\FP\}u\|^2.
\end{eqnarray}
Due to smallness of $\sup_{0\leq t\leq T}\|(a+2c)(t)\|_{L^\infty}$
by the assumption, from \eqref{lem.non.p06} and \eqref{thm.efli.00},
it is easy to see
\begin{eqnarray*}
 \CE_N(U(t))  &\sim &\CE_N^{{\rm lin}}(U(t))+\sum_{k=1}^NC_k\sum_{\substack{|\be|=k \\
|\al|+|\be|\leq N}}\|\pa^\al_\be \{\FI-\FP\}u\|^2,
\end{eqnarray*}
which further implies
\begin{eqnarray*}
 \CE_N(U(t))
&\sim & \|u(t)\|_{L^2_\xi
    (H^N_x)}^2+\|[E(t),B(t)]\|_{H^N}^2+\sum_{k=1}^NC_k\sum_{\substack{|\be|=k \\
|\al|+|\be|\leq N}}\|\pa^\al_\be \{\FI-\FP\}u\|^2\\
&\sim &\|u(t)\|_{H^N_{x,\xi}}^2+\|[E(t),B(t)]\|_{H^N}^2.
\end{eqnarray*}
Thus, $\CE_N(U(t))$ satisfies \eqref{thm.non.1} for any
$\kappa_5>0$. Moreover, by taking $\kappa_5>0$ small enough, the
summation of \eqref{lem.non.p01}, \eqref{lem.non.p03},
$\kappa_4\times$\eqref{lem.non.p05} and then
$\kappa_5\times$\eqref{lem.non.p05} leads to \eqref{lem.non.1} with
$\CD_N(U(t))$ defined in \eqref{thm.non.2}. This completes the proof
of Lemma \ref{lem.non}.
\end{proof}

\noindent{\bf Proof of Theorem \ref{thm.non}:} Let us consider the  uniform-in-time a priori estimates
of solutions under the smallness assumption that
\begin{equation*}
    \sup_{0\leq t\leq T}\left(\|u(t)\|_{H^N_{x,\xi}}^2+\|[E(t),B(t)]\|_{H^N}^2\right)\leq \de
\end{equation*}
for a sufficiently small constant $\de>0$. This smallness assumption also implies that $\sup_{0\leq t\leq T}\|(a+2c)(t)\|_{L^\infty}$ is small enough since $N\geq 4$. Thus, it follows from Lemma \ref{lem.non} that
there are $\CE_N(U(t))$,  $\CD_N(U(t))$ defined in \eqref{lem.non.p07} and \eqref{thm.non.2} such that
\eqref{thm.non.1} holds true for $\CE_N(U(t))$ and
\begin{equation*}
    \frac{d}{dt}\CE_N(U(t))+\la \CD_N(U(t))\leq C (\de^{1/2}+\de)\CD_N(U(t))
\end{equation*}
for any $0\leq t\leq T$, that is
\begin{equation*}
\CE_N(U(t))+\la\int_0^t\CD_N(U(s))ds\leq \CE_N(U_0)
\end{equation*}
for any $0\leq t\leq T$, since $\de>0$ is small enough. Now, the rest proof follows from the standard process by combining the above  uniform-in-time a priori estimates with the local existence as well as the continuity argument as in \cite{GuoVMB} or \cite{DY-09VPB} under the assumption that $ \CE_N(U_0)$  is sufficiently small, and details are omitted for simplicity. The proof of Theorem \ref{thm.non} is complete. \qed

\medskip

 {We conclude this paper with a discussion about the large-time behavior of solutions to the nonlinear system.
Although Theorem \ref{thm.non} shows the global existence of close-to-equilibrium solutions to the Cauchy problem \eqref{VMB.eq}-\eqref{VMB.ID} of the nonlinear Vlasov-Maxwell-Boltzmann system, the decay rate of the obtained solution remains open. This issue has been studied in   \cite{DS-VMB}  for the case of two-species. However, the approach of \cite{DS-VMB} by applying the linear decay property together with the Duhamel's principle to the nonlinear system   can not be applied to the case of one-species here. Let us explain a little the key difficulty in a formal way. In fact,  the linear system in one-species decays as $(1+t)^{-3/8}$ which is slower than $(1+t)^{-3/4}$ in two-species as pointed out in Table 1. Thus, in one-species case,  the quadratic nonlinear source decays as at most $(1+t)^{-3/4}$, and if the   Duhamel's principle was used, the time-integral term generated from the nonhomogeneous source decays as}
$$
 {\int_0^t(1+t-s)^{-\frac{3}{8}} (1+s)^{-\frac{3}{4}}\,ds\leq C (1+t)^{-\frac{1}{8}}.}
$$
 {Thus, the bootstrap argument breaks down and one can not expect the solution to decay as $(1+t)^{-3/8}$ in the nonlinear case. Therefore, the study of the large-time behavior for the one-species Vlasov-Maxwell-Boltzmann system becomes much more difficult than for the two-species case as in \cite{DS-VMB}.}

\bigskip

\noindent {\it Acknowledgments.}  {This work was supported partially by the Direct Grant 2010/2011 from CUHK and by the General Research Fund (Project No.~400511) from RGC of Hong Kong.} The author would like to thank Peter Markowich and Massimo Fornasier for their strong support when he stayed in RICAM, Austrian Academy of Sciences for the Post-doc study during 2008-10, and thank Professors Tong Yang, Chang-Jiang Zhu and Hui-Jiang Zhao for their continuous encouragement. Some useful discussions from Robert M. Strain are much acknowledged.  {Thanks also go to the referee for useful suggestions to improve the presentation of the paper.}

\bigskip


\end{document}